\documentclass{article}
\usepackage{amsmath,amsfonts,amsthm,amssymb}
\DeclareMathSymbol\nullset{\mathord}{AMSb}{"3F}

\begin{document}
\newtheorem{theorem}{Theorem}[section]
\newtheorem{corollary}{Corollary}[section]
\newtheorem{lemma}{Lemma}[section]
\newtheorem{proposition}{Proposition}[section]
{\newtheorem{definition}{Definition}[section]}
\newcounter{sarparts}
\title{\centerline{\bf On a conjecture of V. V. Shchigolev}}
\author{C. Bekh-Ochir and S. A. Rankin
  }
\maketitle
\newcounter{parts}
\def\com#1,#2{[\,{#1},{#2}\,]}
\def\kx{k\langle X\rangle}
\def\kzerox{k_0\langle X\rangle}
\def\konex{k_1\langle X\rangle}
\def\set#1\endset{\{\,#1\,\}}
\def\rest#1{\,\hbox{\vrule height 6pt depth 9pt width .5pt}_{\,#1}}         
\def\choice#1,#2{\binom{#1}{#2}}
\def\kzerox{k_0\langle X\rangle}
\let\cong=\equiv
\def\comp{\mkern2mu\mathchoice%
        {\raise.35ex\hbox{$\scriptscriptstyle\circ$}}
        {\raise.35ex\hbox{$\scriptscriptstyle\circ$}}
        {\raise.14ex\hbox{$\scriptscriptstyle\circ$}}
        {\raise.14ex\hbox{$\scriptscriptstyle\circ$}}}
\def\from{\mkern2mu\hbox{\rm :}\mkern2mu}
\def\ns#1#2{S_{#1}^{(#2)}}
\def\nsp#1{\ns#1{p}}
\def\rd#1#2{R_{{#1}}^{(#2)}}
\def\rdp#1{\rd{#1}{p}}
\def\nh#1{H_{#1}}
\def\nl#1{L_{#1}}

\begin{abstract}
 V. V. Shchigolev has proven that over any infinite field $k$ of characteristic
 $p>2$, the $T$-space generated by $G=\{\,x_1^p,x_1^px_2^p,\ldots\,\}$ is finitely based, which answered a question raised
 by A. V. Grishin. Shchigolev went on to conjecture that every infinite subset of $G$ generates
 a finitely based $T$-space. In this paper, we prove that Shchigolev's conjecture was correct by showing that for 
 {\em any} field of characteristic $p>2$, the $T$-space 
 generated by any subset $\{\,x_1^px_2^p\cdots x_{i_1}^p, x_1^px_2^p\cdots x_{i_2}^p,\ldots\,\}$, $i_1<i_2<i_3<\cdots$,
 of $G$ has a $T$-space basis of size at most $i_2-i_1+1$.
\end{abstract}

\section{Introduction}
 In \cite{Gr} (and later in \cite{ShGr}, the survey paper with V. V. Shchigolev), A. V. Grishin proved that in 
 the free associative algebra with countably infinite generating set $\set x_1,x_2,\ldots\endset$ over an infinite field 
 of characteristic 2, the $T$-space generated by the set $\set x_1^2,x_1^2x_2^2,\ldots\endset$ 
 is not finitely based, and he raised the question as to 
 whether or not over a field
 of characteristic $p>2$, the $T$-space generated by $\set x_1^p,x_1^px_2^p,\ldots\endset$ is finitely based.
 This was resolved by V. V. Shchigolev in \cite{Sh}, wherein he proved that over an infinite field
 of characteristic $p>2$, this $T$-space is finitely based. Shchigolev then raised the question in \cite{Sh}
 as to whether every infinite subset of $\set x_1^p,x_1^px_2^p,\ldots\endset$ generates a finitely based
 $T$-space. In this paper, we prove that over an arbitrary field of characteristic $p>2$, every subset of 
 $\set x_1^p,x_1^px_2^p,\ldots\endset$ generates a $T$-space that can be generated as a $T$-space by finitely 
 many elements, and we give an upper bound for the size of a minimal generating set.

 Let $p$ be a prime (not necessarily greater than 2) and let $k$ denote an
 arbitrary field of characteristic $p$. Let $X=\set x_1,x_2,\ldots\endset$ be a countably infinite set, and 
 let $\kzerox$ denote the free associative $k$-algebra over the set $X$.
 
 \begin{definition}\label{definition: nsd}
  For any positive integer $d$, let 
  $$
   S^{(d)}= S^{(d)}(x_1,x_2,\ldots,x_d)=\sum_{\sigma\in \Sigma_d} \prod_{i=1}^d x_{\sigma(i)},
  $$
  where $\Sigma_d$ is the symmetric group on $d$ letters. Then define
  $\ns{1}{d}=\set S^{(d)}\endset^S$, the $T$-space generated by $\set S^{(d)}\endset$, and for all 
  $n\ge1$, $\ns{n+1}{d}=(\ns{n}{d}\ns{1}{d})^S$.
 \end{definition}
  
 Let $I:i_1<i_2<\cdots$ be a sequence of positive integers (finite or infinite), and then for each $n\ge1$, let
 $\rd{n,I}{d}=\sum_{j=1}^n \ns{i_j}{d}$. When the sequence $I$ is understood, we shall usually write $\rd{n}{d}$
 instead of $\rd{n,I}{d}$. Finally, let $\rd{\infty,I}{d}$ (even if the sequence is finite) denote 
 the $T$-space generated by $\{\, \ns{i}{d}\mid i\in I\,\}$. We shall prove that $\rd{\infty,I}{d}$
 has a $T$-space basis of size at most $i_2-i_1+1$.
 
 \begin{definition}\label{definition: nh}
  Let $\nh{1}=\set x_1^p\endset^S$, and for each $n\ge1$, let
  $\nh{n+1}=(\nh{n}\nh{1})^S$.
 \end{definition}
 
 Then for any positive integer $n$, let $L_{n,I}=\sum_{j=1}^n H_{i_j}$, and let $L_{\infty,I}$ denote the $T$-space generated
 by $\{\,h_i\mid i\in I\,\}$. We prove that $L_{\infty,I}$ is finitely generated as a $T$-space, with a $T$-space
 basis of size at most $i_2-i_1+1$. In particular, this proves that Shchigolev's conjecture is valid. 
 
 \section{Preliminaries}\label{section: fundamental results}
  In this section, $k$ denotes an arbitrary field of characteristic an arbitrary prime $p$, and $V_i$,
  $i\ge1$, denotes a sequence of $T$-spaces of $\kzerox$  satisfying the following two properties:
   \begin{list}{(\roman{sarparts})}{\usecounter{sarparts}}
    \item $(V_iV_j)^S=V_{i+j}$;
    \item for all $m\ge 1$, $V_{2m+1}\subseteq V_{m+1}+V_1$.
   \end{list}

\begin{lemma}\label{lemma: basic v result}
 For any integers $r$ and $s$ with $0<r<s$, $V_{s+t(s-r)}\subseteq V_r+V_s$ for all $t\ge0$.
\end{lemma}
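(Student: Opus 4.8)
The plan is to reduce everything to the single case $t=1$, i.e.\ to the inclusion $V_{2s-r}\subseteq V_r+V_s$, and then to bootstrap to arbitrary $t$ by a one-line induction in which we multiply through by $V_{s-r}$. Before starting I would record the routine fact that, for $T$-spaces, $\big((A+B)C\big)^S=(AC)^S+(BC)^S$: the sum of two $T$-spaces is a $T$-space, $(A+B)C=AC+BC$ as spans, and $(U+W)^S=U^S+W^S$ for any subspaces $U,W$. Combined with property~(i), this says that applying $(\,\cdot\,V_j)^S$ to a containment $V_a\subseteq V_b+V_c$ produces $V_{a+j}\subseteq V_{b+j}+V_{c+j}$.

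For the base case $t=1$ the idea is to rewrite the index $2s-r$ in the shape $(2m+1)+(r-1)$ so that property~(ii) applies. Since $r<s$ we have $s-r\ge 1$, and with $m=s-r$ we get $(2m+1)+(r-1)=2s-r$ and $m\ge 1$. If $r\ge 2$, then $r-1\ge 1$ and property~(i) gives $V_{2s-r}=\big(V_{2(s-r)+1}\,V_{r-1}\big)^S$; property~(ii) gives $V_{2(s-r)+1}\subseteq V_{(s-r)+1}+V_1$; multiplying by $V_{r-1}$ and using (i) once more yields $V_{2s-r}\subseteq V_{(s-r)+1+(r-1)}+V_{1+(r-1)}=V_s+V_r$. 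If $r=1$, no factoring is needed: $2s-1=2(s-1)+1$ with $s-1\ge 1$, so property~(ii) gives $V_{2s-1}\subseteq V_s+V_1$ directly.

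For the induction on $t$, the case $t=0$ is the trivial inclusion $V_s\subseteq V_r+V_s$. Assuming $V_{s+t(s-r)}\subseteq V_r+V_s$, I would multiply by $V_{s-r}$: property~(i) gives $\big(V_{s+t(s-r)}\,V_{s-r}\big)^S=V_{s+(t+1)(s-r)}$, while the right-hand side becomes $(V_rV_{s-r})^S+(V_sV_{s-r})^S=V_s+V_{2s-r}$, which by the base case is contained in $V_r+V_s$. Hence $V_{s+(t+1)(s-r)}\subseteq V_r+V_s$, closing the induction.

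I do not expect a serious obstacle. The only points demanding attention are keeping every index in a product $V_iV_j$ at least $1$ (this is exactly why $r=1$ must be handled separately from $r\ge 2$), and making sure the distributivity of $(\,\cdot\,)^S$ over sums is in place before it is used.
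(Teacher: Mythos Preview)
Your proof is correct and follows essentially the same route as the paper: apply property~(ii) with $m=s-r$, multiply by $V_{r-1}$ to obtain $V_{2s-r}\subseteq V_r+V_s$, then induct by multiplying through by $V_{s-r}$. Your separate treatment of $r=1$ (where $V_{r-1}$ is undefined) is a point the paper glosses over, and your explicit remark on distributivity of $(\,\cdot\,)^S$ over sums is likewise something the paper uses tacitly.
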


\begin{proof}
 The proof is by induction on $t$. There is nothing to show for $t=0$. For $t=1$, let $m=s-r$ in (ii) to obtain
 that $V_{2s-2r+1}\subseteq V_{s-r+1}+V_1$, then multiply by $V_{r-1}$ to obtain
 $V_{r-1}V_{2s-2r+1}\subseteq V_{r-1}V_{s-r+1}+V_{r-1}V_1\subseteq (V_{r-1}V_{s-r+1})^S+(V_{r-1}V_1)^S=
 V_s+V_r$. But then $V_{2s-r}=(V_{r-1}V_{2s-2r+1})^S\subseteq V_s+V_r$, as required.
 
 Suppose now that $t\ge1$ is such that the result holds. Then $V_{s+(t+1)(s-r)}=(V_{s+t(s-r)}V_{s-r})^S
 \subseteq ((V_s+V_r)V_{s-r})^S=V_{2s-r}+V_s\subseteq V_r+V_s+V_s=V_r+V_s$. The result follows now by 
 induction.
\end{proof}

 For any increasing sequence $I: i_1<i_2<\cdots$ of positive integers, we shall refer to $i_2-i_1$ as the
 initial gap of $I$.

 \begin{proposition}\label{proposition: basic sequence}
  For any increasing sequence $I=\set i_j\endset_{j\ge1}$ of positive integers, there exists a set
  $J$ of size at most $i_2-i_1+1$ with entries positive integers
  such that the following hold:
    \begin{list}{(\roman{sarparts})}{\usecounter{sarparts}}
     \item $1,2\in J$;
     \item $\sum_{j=1}^\infty V_{i_j}=\sum_{j\in J} V_{i_j}$.
     \end{list}    
 \end{proposition}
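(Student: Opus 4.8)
The plan is to reduce everything to a purely combinatorial statement about index sets. For a set $K$ of positive integers write $\mathcal W(K)$ for the smallest set of positive integers containing $\{\,i_j:j\in K\,\}$ and closed under the rule: if $r<s$ lie in $\mathcal W(K)$ then $s+t(s-r)\in\mathcal W(K)$ for all $t\ge0$. Lemma~\ref{lemma: basic v result} says exactly that $m\in\mathcal W(K)\Rightarrow V_m\subseteq\sum_{j\in K}V_{i_j}$, so it suffices to exhibit a set $J$ with $1,2\in J$, $|J|\le i_2-i_1+1$, and $\{\,i_j:j\ge1\,\}\subseteq\mathcal W(J)$. I would build $J$ greedily: put $1,2$ in $J$, and for $j=3,4,\dots$ in turn put $j$ in $J$ precisely when $i_j\notin\mathcal W(\{\,j'\in J:j'<j\,\})$. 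Then (ii) is automatic: an index $j$ left out of $J$ has $i_j$ in $\mathcal W$ of the indices chosen before it, hence $V_{i_j}\subseteq\sum_{j'\in J}V_{i_{j'}}$. So the entire problem is the bound $|J|\le i_2-i_1+1$.

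Set $g=i_2-i_1$ and let $c_0$ be the common residue of $i_1$ and $i_2$ modulo $g$. Since $1,2\in J$ I must show at most $g-1$ further indices are chosen. Applying Lemma~\ref{lemma: basic v result} with $r=i_1$, $s=i_2$ gives $\{\,m\ge i_2:m\equiv c_0\pmod g\,\}\subseteq\mathcal W(\{1,2\})$; since $i_j>i_2$ for $j\ge3$, no $j$ with $i_j\equiv c_0\pmod g$ is ever chosen. Moreover, if $j_1<j_2$ were both chosen with $i_{j_1}\equiv i_{j_2}\pmod g$, then $i_{j_2}=i_{j_1}+sg$ for some $s\ge1$, and the Key Lemma below gives $i_{j_2}\in\mathcal W(\{1,2,j_1\})\subseteq\mathcal W(\{\,j'\in J:j'<j_2\,\})$, contradicting the choice of $j_2$. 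Hence the chosen indices other than $1,2$ lie in pairwise distinct residue classes mod $g$, each $\ne c_0$; there are at most $g-1$ of them, so $|J|\le2+(g-1)=i_2-i_1+1$.

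\emph{Key Lemma.} If $p\in\mathcal W(\{1,2,j_0\})$ with $p>i_2$ and $p\not\equiv c_0\pmod g$, then $\{\,p+sg:s\ge0\,\}\subseteq\mathcal W(\{1,2,j_0\})$. I would prove this by induction on $\delta:=(p-i_2)\bmod g\in\{1,\dots,g-1\}$. Abbreviate $\mathcal W=\mathcal W(\{1,2,j_0\})$ and note $C:=\{\,i_2+tg:t\ge0\,\}\subseteq\mathcal W$; let $n_0$ be the largest element of $C$ below $p$ (so $p=n_0+\delta$) and $n_1=n_0+g=p+(g-\delta)$ the least element of $C$ above $p$. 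Also, every element of $\mathcal W$ is $\equiv c_0$ modulo $e:=\gcd(\delta,g)$ (the three generators are, and the closure rule preserves this congruence since $e\mid s-r$ whenever $r\equiv s\equiv c_0$). If $\delta\mid g$, the pair $(n_0,p)$ already puts $\{\,p+t\delta:t\ge0\,\}\supseteq\{\,p+sg\,\}$ into $\mathcal W$; if $(g-\delta)\mid g$, the pair $(p,n_1)$ puts $\{\,n_1+t(g-\delta):t\ge0\,\}\supseteq\{\,p+sg:s\ge1\,\}$ into $\mathcal W$, and $p\in\mathcal W$ already. In the remaining case put $\rho=g\bmod\delta\in\{1,\dots,\delta-1\}$; writing $g=q\delta+\rho$ one has $n_1=p+(q-1)\delta+\rho$, so the pair $(p+(q-1)\delta,\,n_1)$ gives $\{\,n_1+u\rho:u\ge0\,\}\subseteq\mathcal W$. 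In particular $p':=n_1+\rho\in\mathcal W$, and $p'>i_2$, $p'\not\equiv c_0\pmod g$, $(p'-i_2)\bmod g=\rho<\delta$, so the induction hypothesis yields $\{\,p'+sg:s\ge0\,\}\subseteq\mathcal W$ — a second arithmetic progression of step $g$ inside $\mathcal W$. Now $\mathcal W$ contains $C$, the progression $\{\,p'+sg\,\}$, the step-$\rho$ progression out of $n_1$, and the step-$\delta$ progression out of $p$, all of whose steps have greatest common divisor $e$; a finite chain of applications of Lemma~\ref{lemma: basic v result} among these, staying within the window $[p,p+g)$, produces two elements $w<w+e$ of $\mathcal W$ in that window, whence $\{\,w+ue:u\ge0\,\}\subseteq\mathcal W$; since $e\mid g$ and $p+g\equiv w\equiv c_0\pmod e$ with $p+g>w$, this forces $p+g\in\mathcal W$, and then $\{\,p+sg:s\ge1\,\}=\{\,(p+g)+vg:v\ge0\,\}\subseteq\{\,w+ue:u\ge0\,\}$, finishing the induction since $p\in\mathcal W$.

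The step I expect to be the main obstacle is the last one: organizing the Euclidean-type descent among those several progressions so that it terminates with a pair of elements of $\mathcal W$ differing by exactly $e$ and located at height at most $p+g$ (so that $p+g$ itself — and hence every $p+sg$ — is caught, not merely a tail $\{\,p+sg:s\ge s_0\,\}$ with $s_0>1$). The base cases $\delta\mid g$ and $(g-\delta)\mid g$, the closure reformulation, the greedy construction, and the residue‑class count are all routine by comparison.
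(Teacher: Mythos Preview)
Your reduction to a combinatorial closure $\mathcal W$ and the greedy construction of $J$ are sound, and the residue-class counting would indeed give $|J|\le i_2-i_1+1$ once the Key Lemma is in hand. But the Key Lemma proof has a concrete error in addition to the gap you flag. You assert that every element of $\mathcal W=\mathcal W(\{1,2,j_0\})$ is $\equiv c_0\pmod e$ with $e=\gcd(\delta,g)$, where $\delta=(p-i_2)\bmod g$. That congruence is only guaranteed modulo $e_0:=\gcd(i_{j_0}-i_2,\,g)$, the quantity determined by the \emph{generators}; your $e$ depends on the auxiliary point $p$, and one has $e_0\mid e$ but not conversely. For instance take $g=15$, $i_{j_0}=i_2+1$ (so $e_0=1$ and $\mathcal W$ contains every integer $\ge i_2$), and $p=i_2+6$ (so $\delta=6$, $e=3$, and neither $\delta$ nor $g-\delta$ divides $g$, so this is not a base case): then $i_2+2\in\mathcal W$ but $i_2+2\not\equiv c_0\pmod 3$. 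Since your induction applies the lemma to $p'\ne i_{j_0}$, you genuinely land in this situation, and the final deduction ``$p+g\equiv w\pmod e$'' collapses. Even after replacing $e$ by $e_0$ throughout, the step you yourself singled out---arranging a Euclidean descent that produces two elements of $\mathcal W$ exactly $e_0$ apart and within $[p,p+g)$---still needs a real argument; it is plausible but not routine.

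The paper avoids the closure analysis entirely by inducting on the initial gap $l=i_2-i_1$ directly at the level of the $V$'s. If every $i_j$ lies in $\{\,i_2+ql:q\ge0\,\}$, Lemma~\ref{lemma: basic v result} already gives $J=\{1,2\}$. Otherwise, let $r$ be least with $i_r$ off that progression, choose $t$ with $i_2+tl<i_r<i_2+(t+1)l$, and form the sequence $I'=(i_2+tl,\ i_r,\ i_{r+1},\dots)$. Its initial gap is $i_r-(i_2+tl)\le l-1$, so by induction there is $J'$ of size $\le l$ handling $I'$; then $J=\{1,2\}\cup\{\,r+j-2:j\in J',\ j\ge2\,\}$ has size $\le l+1$, and since $V_{i_2+tl}\subseteq V_{i_1}+V_{i_2}$ the artificial first term of $I'$ is absorbed. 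Only one invocation of Lemma~\ref{lemma: basic v result} is needed at each stage, and no analysis of $\mathcal W$ beyond the single progression $\{i_2+ql\}$ is required. Your approach, if the Key Lemma were completed, would give the same bound and a somewhat more explicit $J$; the paper's argument trades that explicitness for a substantially shorter proof.
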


\begin{proof}
  The proof of the proposition shall be by induction on the initial gap. By Lemma
  \ref{lemma: basic v result}, for a sequence with initial gap 1, we may take
  $J=\set i_1,i_2\endset$ . Suppose now that $l>1$ is an integer for which the result holds for all
  increasing sequences with initial gap less than $l$, and let $i_1<i_2<\cdots$ be a sequence with
  initial gap $i_2-i_1=l$. If for all $j\ge3$, $V_{i_j}\subseteq V_{i_1}+V_{i_2}$, then
  $J=\set 1,2\endset$ meets the requirements, so we may suppose that there exists $j\ge3$
  such that $V_{i_j}$ is not contained in $V_{i_1}+V_{i_2}$. By Lemma \ref{lemma: basic v result},
  this means that there exists $j\ge 3$ such that $i_j\notin \set i_2+ql\mid q\ge0\endset$. Let $r$ be 
  least such that $i_r\notin \set i_2+ql\mid q\ge0\endset$, so that there exists $t$ such that $i_2+tl<i_r<i_2+(t+1)l$.
  Form a sequence $I'$ from $I$ by first removing all entries of $I$ up to (but not including) $i_r$, then prepend
  the integer $i_2+tl$. Thus $i_1'$, the first entry of $I'$, is $i_2+tl$, while for all $j\ge2$, $i_j'=i_{r+j-2}$.
  Note that $i_2'-i_1'=i_r-(i_2+tl)\le l-1$.  By hypothesis, there exists a subset $J'$ of size at most 
  $i_2'-i_1'+1\le l=i_2-i_1$ that contains 1 and 2 and is such that
  $\sum_{j=1}^\infty V_{i_j'}=\sum_{j\in J'} V_{i_j'}$.  
  Set 
  $$
   J=\set 1,2\endset\cup\set r+j-2\mid j\in J',\ j\ge 2\endset.
  $$
  Then $|J|=|J'|+1\le i_2-i_1+1$ and
  $$
   V_{i_2+tl}+\sum_{j=r}^{\infty} V_{i_{j}} 
    =\sum_{j=1}^\infty V_{i_j'}=\sum_{j\in J'} V_{i_j'}= V_{i_2+tl}+\sum_{\substack{j\in J'\\j\ge2}} V_{i_j'}=
    V_{i_2+tl}+\sum_{\substack{j\in J\\j\ge 3}} V_{i_j}
   $$
   and by Lemma \ref{lemma: basic v result}, $V_{i_2+tl}\subseteq V_{i_2}+V_{i_2}$, so 
   \begin{align*}
    V_{i_1}+V_{i_2}+\sum_{j=r}^{\infty} V_{i_{j}}&=  V_{i_1}+V_{i_2}+ V_{i_2+tl}+\sum_{j=r}^{\infty} V_{i_{j}} 
    = V_{i_1}+V_{i_2}+ V_{i_2+tl}+\sum_{\substack{j\in J\\j\ge 3}} V_{i_j}\\
    &= V_{i_1}+V_{i_2}+ \sum_{\substack{j\in J\\j\ge 3}} V_{i_j}.
   \end{align*}
   Finally, the choice of $r$ implies that
   $$
    \sum_{j\in J} V_{i_j}=V_{i_1}+V_{i_2}+\sum_{\substack{j\in J\\j\ge 3}} V_{i_{j}}
    =V_{i_1}+V_{i_2}+\sum_{j=r}^{\infty} V_{i_{j}}=\sum_{j=1}^\infty V_{i_j}. 
  $$
  This completes the proof of the inductive step. 
\end{proof}

 We remark that in Proposition \ref{proposition: basic sequence}, it is possible to improve the bound from $i_2-i_1+1$
 to $2(\log_2(2(i_2-i_1))$.

 In the sections to come, we shall examine some important situations of the kind described above.
 
\section{The $\rd{n}{d}$ sequence}
  We shall have need of certain results that first appeared in \cite{CR}. For completeness, we include them
  with proofs where necessary. In this section, $p$ denotes an
  arbitrary prime, $k$ an arbitrary field of characteristic $p$, and $d$ an arbitrary positive integer.
     
  The proof of the first result is immediate.
  
  \begin{lemma}\label{lemma: basic}
    Let $d$ be a positive integer. Then 
    \begin{align*}
     S^{(d+1)}(x_1,x_2,\ldots,x_{d+1})&=\sum_{i=1}^{d+1}S^{(d)}(x_1,x_2,\ldots,\hat{x}_i,\ldots,x_{d+1})x_i\tag{1}\\ 
     &\hskip-70pt=S^{(d)}(x_1,x_2,\ldots,x_d)x_{d+1}+\sum_{i=1}^d S^{(d)}(x_1,x_2,\ldots,x_{d+1}x_i,\ldots,x_d)\tag{2}\\
     &\hskip-70pt=x_{d+1}S^{(d)}(x_1,x_2,\ldots,x_d)+\sum_{i=1}^d S^{(d)}(x_1,x_2,\ldots,x_ix_{d+1},\ldots,x_d).\tag{3}
    \end{align*}
  \end{lemma}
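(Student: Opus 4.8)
All three identities are proved by partitioning the index set of the defining sum for $S^{(d+1)}$, so the plan is purely combinatorial bookkeeping; there is essentially no analytic obstacle, only the need to keep the indexing of the "merged" variables straight.

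For (1), I would partition $\Sigma_{d+1}$ according to the value $\sigma(d+1)=i\in\{1,\dots,d+1\}$. For fixed $i$, as $\sigma$ ranges over the permutations with $\sigma(d+1)=i$, the truncated word $\sigma(1),\dots,\sigma(d)$ ranges exactly once over all orderings of $\{1,\dots,d+1\}\setminus\{i\}$; summing the corresponding products gives $S^{(d)}(x_1,\dots,\widehat{x}_i,\dots,x_{d+1})$, and each of those words is then followed by the factor $x_i$. Summing over $i$ yields (1), and the term count $(d+1)\cdot d!=(d+1)!$ confirms that no monomial is double-counted or omitted.

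For (2), I would instead classify each monomial of $S^{(d+1)}$ by the letter that immediately follows the occurrence of $x_{d+1}$. Either $x_{d+1}$ is the last letter — these monomials sum to $S^{(d)}(x_1,\dots,x_d)x_{d+1}$ — or $x_{d+1}$ is immediately to the left of $x_i$ for a unique $i\le d$. In the latter case, regard the block $x_{d+1}x_i$ as a single formal symbol occupying the $i$-th slot; as the monomial varies over this class, the $d$ symbols $x_1,\dots,x_{i-1},\,x_{d+1}x_i,\,x_{i+1},\dots,x_d$ get permuted in all $d!$ ways, so the class sums to $S^{(d)}(x_1,\dots,x_{d+1}x_i,\dots,x_d)$. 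Since these classes are pairwise disjoint and exhaust $\Sigma_{d+1}$ (again $d!+d\cdot d!=(d+1)!$), summing gives (2). Identity (3) is proved in the mirror-image fashion: classify monomials by the letter immediately \emph{preceding} $x_{d+1}$ (or the case that $x_{d+1}$ is the first letter), and merge $x_i$ with $x_{d+1}$ on its right into the block $x_ix_{d+1}$.

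The only point requiring a little care is verifying in (2) and (3) that collapsing $x_{d+1}x_i$ (resp.\ $x_ix_{d+1}$) into one symbol really does induce a bijection between the monomials of the given class and $\Sigma_d$ acting on the $d$ merged symbols — i.e.\ that distinct monomials of $S^{(d+1)}$ in a class correspond to distinct arrangements of the $d$ symbols and conversely. This is immediate once one notes that the position of $x_{d+1}$ together with the arrangement of the remaining letters is recovered uniquely from the arrangement of the $d$ merged symbols. With that observation the lemma follows at once.
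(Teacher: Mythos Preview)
Your argument is correct: partitioning $\Sigma_{d+1}$ by the last letter for (1) and by the neighbour of $x_{d+1}$ for (2) and (3) is exactly the right bookkeeping, and the bijection you describe between a class of monomials and $\Sigma_d$ acting on the merged alphabet is valid. The paper itself simply declares the lemma immediate and gives no proof, so your write-up is in the same spirit, just with the details supplied.
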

  
  \begin{corollary}\label{corollary: mod r1}
   Let $d$ be any positive integer. Then modulo $\ns{1}{d}$,
   $$
    S^{(d+1)}(x_1,x_2,\ldots,x_{d+1})\cong S^{(d)}(x_1,\ldots,x_d)x_{d+1}\cong x_{d+1}S^{(d)}(x_1,\ldots,x_d).
   $$
  \end{corollary}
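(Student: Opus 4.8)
The plan is to read the Corollary off Lemma~\ref{lemma: basic}, using only the defining closure property of a $T$-space: the $T$-space $\ns{1}{d}=\set S^{(d)}\endset^S$ is spanned by the images of $S^{(d)}(x_1,\ldots,x_d)$ under endomorphisms of $\kzerox$, so in particular every polynomial obtained from $S^{(d)}(x_1,\ldots,x_d)$ by a substitution of the generators lies in $\ns{1}{d}$. The congruences in the Corollary are then immediate, since each equals $S^{(d+1)}$ minus a sum of such substitution instances.

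First I would invoke identity~(2) of Lemma~\ref{lemma: basic}, which expresses $S^{(d+1)}(x_1,\ldots,x_{d+1})$ as $S^{(d)}(x_1,\ldots,x_d)x_{d+1}$ plus the sum $\sum_{i=1}^d S^{(d)}(x_1,\ldots,x_{d+1}x_i,\ldots,x_d)$. Each summand is the image of $S^{(d)}(x_1,\ldots,x_d)$ under the endomorphism of $\kzerox$ that fixes every $x_j$ with $j\ne i$ and sends $x_i$ to $x_{d+1}x_i$; note that $x_{d+1}$ does not occur in $S^{(d)}(x_1,\ldots,x_d)$, so no clash of variables arises. Hence every summand lies in $\ns{1}{d}$, and therefore $S^{(d+1)}(x_1,\ldots,x_{d+1})\cong S^{(d)}(x_1,\ldots,x_d)x_{d+1}$ modulo $\ns{1}{d}$.

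The second congruence is obtained in exactly the same way from identity~(3): there $S^{(d+1)}(x_1,\ldots,x_{d+1})$ equals $x_{d+1}S^{(d)}(x_1,\ldots,x_d)$ plus $\sum_{i=1}^d S^{(d)}(x_1,\ldots,x_ix_{d+1},\ldots,x_d)$, and each summand is now the image of $S^{(d)}$ under the endomorphism sending $x_i$ to $x_ix_{d+1}$ and fixing the remaining generators, hence again lies in $\ns{1}{d}$. This gives $S^{(d+1)}(x_1,\ldots,x_{d+1})\cong x_{d+1}S^{(d)}(x_1,\ldots,x_d)$ modulo $\ns{1}{d}$ as well, completing the proof.

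There is essentially no obstacle here; the only point that needs a moment's care is the bookkeeping of variables, namely checking that the substitutions used really are endomorphisms of $\kzerox$ and that $x_{d+1}$ plays no role inside $S^{(d)}(x_1,\ldots,x_d)$. After that the result is immediate from Lemma~\ref{lemma: basic}, and as a byproduct one also records $S^{(d)}(x_1,\ldots,x_d)x_{d+1}\cong x_{d+1}S^{(d)}(x_1,\ldots,x_d)$ modulo $\ns{1}{d}$, a near-commutativity that will be convenient in later sections.
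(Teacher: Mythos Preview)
Your argument is correct and is precisely the route the paper takes: the proof there simply says ``This is immediate from (2) and (3) of Lemma~\ref{lemma: basic}'', and you have spelled out the one-line reason, namely that each extra summand is a substitution instance of $S^{(d)}$ and hence lies in $\ns{1}{d}$.
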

  
  \proof
   This is immediate from (2) and (3) of Lemma \ref{lemma: basic}.
  \endproof

  We remark that Corollary \ref{corollary: mod r1} implies that for every $u\in S^{(d)}_1$ and $v\in \kzerox$,
  $\com u,v\in S^{(d)} _1$. While we shall not have need of this fact, we note that in \cite{Sh}, Shchigolev 
  proves that if the field is infinite, then for any $T$-space $V$, if $v\in V$, then $\com v,u\in V$ for any 
  $u\in \kzerox$.
  
 The next proposition is a strengthened version of Proposition 2.1 of \cite{CR}.
  
 \begin{proposition}\label{proposition: generalized r2=r3}
  For any $u,v\in \kzerox$, 
   \begin{list}{(\roman{sarparts})}{\usecounter{sarparts}}
   \item $(\ns{1}{d}uv)^S\subseteq \ns{1}{d}+(\ns{1}{d}u)^S+(\ns{1}{d}v)^S$; and
   \item $(uv\ns{1}{d})^S\subseteq \ns{1}{d}+(u\ns{1}{d})^S+(v\ns{1}{d})^S$.
   \end{list} 
 \end{proposition}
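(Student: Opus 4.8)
The plan is to prove (i) and observe that (ii) follows by the left-right symmetric argument (or by applying the anti-automorphism of $\kzerox$ that reverses words, which sends $S^{(d)}$ to itself and hence fixes $\ns{1}{d}$). So I concentrate on (i). The core identity to exploit is Lemma \ref{lemma: basic}, in the guise of Corollary \ref{corollary: mod r1}: modulo $\ns{1}{d}$, one has $S^{(d+1)}(x_1,\dots,x_d,y)\cong S^{(d)}(x_1,\dots,x_d)\,y$ for any substitution value $y$. The idea is that $\ns{1}{d}$ is built from $S^{(d)}$, and appending a new variable to $S^{(d)}$ on the right is, up to $\ns{1}{d}$, the same as passing to $S^{(d+1)}$; but $S^{(d+1)}$ is symmetric in all $d+1$ of its arguments, which is exactly the leverage needed to move a product $uv$ apart into separate occurrences of $u$ and $v$.

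Concretely, take a typical generator of $(\ns{1}{d}uv)^S$: after applying an endomorphism $\varphi$ of $\kzerox$, it has the form $w\cdot \varphi(u)\varphi(v)$ where $w=\varphi(S^{(d)})$ is the corresponding substitution instance of $S^{(d)}$ (note $\ns{1}{d}$ is closed under multiplication by arbitrary elements of $\kzerox$ and under endomorphisms, so it suffices to treat these). Write $a=\varphi(u)$, $b=\varphi(v)$, and $w=S^{(d)}(z_1,\dots,z_d)$ with $z_i=\varphi(x_i)$. By Corollary \ref{corollary: mod r1} applied with a fresh variable specialized to $ab$, we get $w\,ab\cong S^{(d+1)}(z_1,\dots,z_d,ab)$ modulo $\ns{1}{d}$. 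Now use the symmetry of $S^{(d+1)}$: expand $S^{(d+1)}(z_1,\dots,z_d,ab)$ using Lemma \ref{lemma: basic}(2) (the expansion that pulls the last argument out), which gives $S^{(d)}(z_1,\dots,z_d)\,ab$ plus a sum of terms $S^{(d)}(z_1,\dots,ab\,z_i,\dots,z_d)$. Each such term is $S^{(d)}$ evaluated at a tuple, hence lies in $\ns{1}{d}$. That alone only recovers the trivial statement; the point is to instead compare two different expansions. Run the expansion of $S^{(d+1)}(z_1,\dots,z_d,a b)$ against $S^{(d+1)}$ with $a$ and $b$ treated as occupying two separate argument slots — i.e. consider $S^{(d+2)}(z_1,\dots,z_d,a,b)$ and expand it two ways. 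Expanding off the last slot, $S^{(d+2)}(\dots,a,b)\cong S^{(d+1)}(z_1,\dots,z_d,a)\,b$ modulo $\ns{1}{d+1}$; iterating, $S^{(d+2)}(\dots,a,b)\cong S^{(d)}(z_1,\dots,z_d)\,a\,b$ modulo $\ns{1}{d}+(\ns{1}{d}a)^S$ after accounting for the intermediate terms (each intermediate $S^{(d)}$-at-a-tuple-times-$b$ sits in $(\ns{1}{d}b)^S$, and the $\ns{1}{d+1}$-error lands in $\ns{1}{d}$ by Corollary \ref{corollary: mod r1} run backwards). Symmetrically, expanding off the slot holding $a$ first yields the same leading term $S^{(d)}(z_1,\dots,z_d)\,a\,b$ modulo $\ns{1}{d}+(\ns{1}{d}b)^S+(\ns{1}{d}a)^S$ — but crucially the "error" terms generated this way are built from substitution instances of $S^{(d)}$ multiplied on the right by a single one of $a$ or $b$ (or by something absorbable into $\ns{1}{d}$), because moving $a$ to the outside forces $b$ to land inside an $S^{(d)}$-slot of the form $\dots b z_i \dots$ or $\dots z_i b\dots$, which is a substitution instance of $S^{(d)}$, hence in $\ns{1}{d}$, or else to be the trailing factor. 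Matching the leading terms and collecting, $w\,ab = S^{(d)}(z_1,\dots,z_d)\,a\,b$ lies in $\ns{1}{d}+(\ns{1}{d}a)^S+(\ns{1}{d}b)^S$, and since $a\in\varphi(\kzerox u)\subseteq$ the relevant span and likewise for $b$, this element lies in $\ns{1}{d}+(\ns{1}{d}u)^S+(\ns{1}{d}v)^S$. Taking the $T$-space generated by all such generators gives (i).

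The main obstacle I anticipate is bookkeeping the error terms cleanly: when one expands $S^{(d+1)}$ or $S^{(d+2)}$ via Lemma \ref{lemma: basic}, the "internal" terms $S^{(d)}(z_1,\dots,z_i v,\dots,z_d)$ are only manifestly in $\ns{1}{d}$ — they are not obviously in $(\ns{1}{d}v)^S$ or $(\ns{1}{d}u)^S$, so one must be careful that every term peeled off in the course of the argument is of a shape that is visibly absorbed into one of the three summands on the right-hand side. The cleanest route is probably to prove directly by induction on $d$ (or by a direct manipulation using (2) of Lemma \ref{lemma: basic} applied to $S^{(d)}(z_1,\dots,z_{d-1},z_d)\,ab$ with $z_d$ specialized appropriately) the single identity
\[
  w\,ab \;\cong\; (w\,a)\,b \;+\;(\text{terms in }(\ns{1}{d}b)^S)\pmod{\ns{1}{d}+(\ns{1}{d}a)^S},
\]
and then symmetrize; but in any case the combinatorial care in tracking which slot each of $a,b$ occupies after each application of Lemma \ref{lemma: basic} is where the real work lies. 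I expect Corollary \ref{corollary: mod r1}, used repeatedly in both directions, to be the engine that converts "append a variable on the right" into "increase the degree of $S$", and the symmetry of $S^{(d+2)}$ in its last two arguments to be the one structural fact that makes $ab$ and $ba$ — and hence $a$-then-$b$ versus $b$-then-$a$ peeling — interchangeable modulo the allowed error.
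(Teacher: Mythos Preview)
Your plan correctly identifies Corollary~\ref{corollary: mod r1} as the main tool and has the right instinct that symmetry of a higher $S^{(\cdot)}$ is what separates $u$ from $v$, but the specific route through $S^{(d+2)}$ does not close. Two concrete problems: first, the assertion that ``the $\ns{1}{d+1}$-error lands in $\ns{1}{d}$'' is false, since $S^{(d+1)}\equiv S^{(d)}x_{d+1}\pmod{\ns{1}{d}}$ shows $S^{(d+1)}\notin\ns{1}{d}$ in general. Second, if you carefully expand $S^{(d+2)}(z_1,\dots,z_d,a,b)$ by peeling off $b$ (Lemma~\ref{lemma: basic}(2)) and then reduce each resulting $S^{(d+1)}$ via Corollary~\ref{corollary: mod r1}, you get $S^{(d+2)}\equiv S^{(d)}(z)\,ab+S^{(d)}(z)\,ba$ modulo $\ns{1}{d}+(\ns{1}{d}a)^S+(\ns{1}{d}b)^S$; peeling off $a$ first gives exactly the same congruence, so comparing the two expansions yields nothing. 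You are left needing $S^{(d)}(z)\,ba$ to lie in the right-hand side, which is the original problem with $a,b$ swapped. (Your ``cleanest route'' identity $w\,ab\cong(w\,a)\,b$ is literally associativity, so it carries no information.)

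The paper's argument uses $S^{(d+1)}$, not $S^{(d+2)}$, and the trick is different: from Lemma~\ref{lemma: basic}(1) and (2) one has $\sum_{i=1}^{d}S^{(d)}(x_1,\dots,\hat x_i,\dots,x_{d+1})\,x_i\in\ns{1}{d}$. Multiply this on the right by $v$, so the whole sum lies in $(\ns{1}{d}v)^S$. Now isolate the $i=1$ summand $S^{(d)}(x_2,\dots,x_{d+1})\,x_1 v$ --- this is (after relabelling) the element you want. For each remaining summand $i\ge2$, apply Corollary~\ref{corollary: mod r1} twice, passing up to $S^{(d+1)}$ and back down with a different variable pulled out, to rewrite $S^{(d)}(\dots,x_1,\dots)\,x_iv$ as $S^{(d)}(\dots,x_iv,\dots)\,x_1$ modulo $\ns{1}{d}$. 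The upshot is that all the $i\ge2$ terms become $(\text{element of }\ns{1}{d})\cdot x_1$, and substituting $x_1\mapsto u$ places them in $(\ns{1}{d}u)^S$. The key idea you are missing is to keep one variable (the future $u$) free and use the symmetry of $S^{(d+1)}$ to rotate it from an interior argument slot to the trailing position, rather than trying to split an already-formed product $ab$.
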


 \proof
  We shall prove the first statement; the proof of the second is similar and will be omitted.
  By (1) of Lemma \ref{lemma: basic},
  $$
    \sum_{i=1}^{d}S^{(d)}(x_1,\ldots,\hat{x}_i,\ldots,x_{d+1})x_i = S^{(d+1)}(x_1,\ldots,x_{d+1})- S^{(d)}(x_1,\ldots,x_d)x_{d+1}
  $$
  and by (2) of Lemma \ref{lemma: basic}, $S^{(d+1)}(x_1,\ldots,x_{d+1})- S^{(d)}(x_1,\ldots,x_d)x_{d+1}\in \ns{1}{d}$.
  Let $v\in \kzerox$. Then 
   \begin{align*}
  S^{(d)}(x_2,\ldots,x_{d+1})x_1v + &\sum_{i=2}^{d} S^{(d)}(x_1,\ldots,\hat{x}_i,\ldots,x_{d+1})x_iv\\
  &\hskip4pt=\sum_{i=1}^{d}S^{(d)}(x_1,x_2,\ldots,\hat{x}_i,\ldots,x_{d+1})x_iv\in (\ns{1}{d}v)^S\mkern-5mu\hbox to 0pt{.\hss}
  \end{align*}
  Now for each $i=2,\ldots,d$, we use two applications of Corollary \ref{corollary: mod r1} to obtain
  \begin{align*}
  S^{(d)}(x_1,\ldots,\hat{x}_i,\ldots,x_{d+1})x_iv &\cong S^{(d+1)}(x_1,\ldots,\hat{x}_i,\ldots,x_{d+1},x_iv)\\
  &\cong S^{(d)}(x_2,\ldots,\hat{x}_i,\ldots,x_{d+1},x_iv)x_1\mod{\ns{1}{d}}.
  \end{align*}
  Thus 
  {\abovedisplayskip=0pt\belowdisplayskip=4pt
  $$
  S^{(d)}(x_2,\ldots,x_{d+1})x_1v+\bigl((\sum_{i=2}^{d} S^{(d)}(x_2,\ldots,\hat{x}_i,\ldots,x_{i}v)\bigr)x_1\in (\ns{1}{d}v)^S+\ns{1}{d}.
  $$
  }
  Thus for $u\in\kzerox$, we obtain
  $S^{(d)}(x_2,\ldots,x_{d+1})uv \in (\ns{1}{d}u)^S+(\ns{1}{d}v)^S+\ns{1}{d}$,
  and so 
  $$(\ns{1}{d}uv)^S\subseteq (\ns{1}{d}u)^S+(\ns{1}{d}v)^S+\ns{1}{d},
  $$
  as required.
 \endproof 

\begin{corollary}\label{corollary: r result}
 Let $d$ be any positive integer. Then the sequence $\ns{n}{d}$, $n\ge 1$, satisfies
  \begin{list}{(\roman{sarparts})}{\usecounter{sarparts}}
  \item For all $m,n\ge 1$, $(\ns{m}{d}\ns{n}{d})^S=\ns{m+n}{d}$;
  \item For all $m\ge1$, $\ns{2m+1}{d}\subseteq \ns{m+1}{d}+\ns{1}{d}$.
  \end{list} 
\end{corollary}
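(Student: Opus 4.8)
The plan is to verify the two properties directly from the definitions of $\ns{n}{d}$ and from Proposition~\ref{proposition: generalized r2=r3}. Property~(i) should be the easier of the two. Recall that $\ns{1}{d}=\set S^{(d)}\endset^S$ and $\ns{n+1}{d}=(\ns{n}{d}\ns{1}{d})^S$. I would first establish, by a routine induction on $n$ using associativity of the product of $T$-spaces, that $(\ns{m}{d}\ns{n}{d})^S=\ns{m+n}{d}$ for all $m,n\ge1$: the base case $n=1$ is the defining relation, and for the inductive step one writes $(\ns{m}{d}\ns{n+1}{d})^S=(\ns{m}{d}(\ns{n}{d}\ns{1}{d})^S)^S=((\ns{m}{d}\ns{n}{d})^S\ns{1}{d})^S=(\ns{m+n}{d}\ns{1}{d})^S=\ns{m+n+1}{d}$, where the middle equality is the standard fact that $((AB)^SC)^S=(A(BC)^S)^S$ for $T$-spaces (both equal the $T$-space generated by all products $abc$). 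This gives (i).

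For property~(ii), the idea is to peel off a middle factor and apply Proposition~\ref{proposition: generalized r2=r3}(i). Using (i), we have $\ns{2m+1}{d}=(\ns{m}{d}\ns{1}{d}\ns{m}{d})^S$, so it suffices to show $(\ns{m}{d}\ns{1}{d}\ns{m}{d})^S\subseteq\ns{m+1}{d}+\ns{1}{d}$. A typical generator of the left side is a $T$-substitution instance of $w_1\,S^{(d)}(x_{i_1},\dots,x_{i_d})\,w_2$ with $w_1\in\ns{m}{d}$ and $w_2\in\ns{m}{d}$ (viewing these as concrete elements). The key move is to regard the displayed $S^{(d)}$ as the generator of a copy of $\ns{1}{d}$ sitting between the two factors $w_1$ and $w_2$, so that the element lies in $(\ns{1}{d}\,u\,v)^S$ after relabelling — but the order is $u\cdot S^{(d)}\cdot v$, not $S^{(d)}\cdot u\cdot v$. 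I would therefore use the ``two-sided'' bookkeeping: first apply Corollary~\ref{corollary: mod r1} to move the $S^{(d)}$ factor so that one of $w_1,w_2$ is absorbed on the correct side, reducing to the form handled by Proposition~\ref{proposition: generalized r2=r3}, and then split $(\ns{1}{d}\,w_1w_2)^S\subseteq\ns{1}{d}+(\ns{1}{d}w_1)^S+(\ns{1}{d}w_2)^S$. Since $w_1,w_2\in\ns{m}{d}$, each of $(\ns{1}{d}w_1)^S$ and $(\ns{1}{d}w_2)^S$ is contained in $(\ns{1}{d}\ns{m}{d})^S=\ns{m+1}{d}$, and $\ns{1}{d}\subseteq\ns{1}{d}$, giving the desired containment.

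The main obstacle I anticipate is the mismatch in the position of the $S^{(d)}$ factor: Proposition~\ref{proposition: generalized r2=r3} is stated for $S^{(d)}$ appearing at one end of a product ($\ns{1}{d}uv$ or $uv\ns{1}{d}$), whereas in $\ns{2m+1}{d}=(\ns{m}{d}\ns{1}{d}\ns{m}{d})^S$ the $\ns{1}{d}$ sits in the middle. Resolving this cleanly is where Corollary~\ref{corollary: mod r1} does the real work — it lets one commute the $S^{(d)}$ generator past a variable (modulo $\ns{1}{d}$), so that after a substitution collapsing $w_1$ into a single ``variable'' one is genuinely in the situation $u v \ns{1}{d}$ or $\ns{1}{d} u v$. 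One must be a little careful that the substitutions used to invoke Corollary~\ref{corollary: mod r1} are legitimate $T$-space operations and that the error terms land in $\ns{1}{d}$, but this is exactly the pattern already executed in the proof of Proposition~\ref{proposition: generalized r2=r3}, so no genuinely new difficulty arises. (Alternatively, one can bypass the positional issue entirely by noting $\ns{2m+1}{d}=(\ns{1}{d}\ns{m}{d}\ns{m}{d})^S$ via (i) and applying Proposition~\ref{proposition: generalized r2=r3}(i) with $u,v\in\ns{m}{d}$ directly; I would present this shorter route if the needed identity $(\ns{m}{d}\ns{m}{d})^S=\ns{2m}{d}$ — itself an instance of (i) — suffices, which it does.)
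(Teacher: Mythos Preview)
Your proposal is correct, and the parenthetical alternative you mention at the end is exactly the paper's argument: write $\ns{2m+1}{d}=(\ns{1}{d}\ns{m}{d}\ns{m}{d})^S$ via part~(i), then apply Proposition~\ref{proposition: generalized r2=r3}(i) with $u,v\in\ns{m}{d}$ to get containment in $\ns{1}{d}+(\ns{1}{d}\ns{m}{d})^S=\ns{1}{d}+\ns{m+1}{d}$. Your longer ``middle position'' discussion is an unnecessary detour---once (i) is established, the factor $\ns{1}{d}$ can be placed at either end by definition, so no commuting via Corollary~\ref{corollary: mod r1} is needed---but you correctly identify and state the short route yourself.
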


\begin{proof}
 The first statement follows immediately from Definition \ref{definition: nsd} by an elementary
 induction argument. For the second statement, let $m\ge1$. Then by Proposition \ref{proposition: generalized r2=r3},
 for any $u,v\in\ns{m}{d}$, $(\ns{1}{d}uv)^S\subseteq \ns{1}{d}+(\ns{1}{d}u)^S+(\ns{1}{d}v)^S$,
 which implies that $(\ns{1}{d}\ns{m}{d}\ns{m}{d})^S\subseteq \ns{1}{d}+(\ns{1}{d}\ns{m}{d})^S$.
 By (i), this yields $\ns{2m+1}{d}\subseteq \ns{1}{d}+\ns{m+1}{d}$, as required.
\end{proof} 
 
\begin{theorem}\label{theorem: r}
 Let $I$ denote any increasing sequence of positive integers with initial gap $g$. Then $\rd{\infty,I}{d}$
 is finitely based, with a $T$-space basis of size at most $g+1$.
\end{theorem}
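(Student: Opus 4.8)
The plan is to read Theorem \ref{theorem: r} off the abstract machinery of Section \ref{section: fundamental results}, once one observes that every $\ns{n}{d}$ is in fact a \emph{singly generated} $T$-space. First I would set $V_n=\ns{n}{d}$ for $n\ge1$; by Corollary \ref{corollary: r result} this sequence satisfies the two standing hypotheses of Section \ref{section: fundamental results}, namely $(V_iV_j)^S=V_{i+j}$ and $V_{2m+1}\subseteq V_{m+1}+V_1$ for all $m\ge1$. Proposition \ref{proposition: basic sequence} then applies to the given sequence $I:i_1<i_2<\cdots$ and produces a finite set $J$ of positive integers with $1,2\in J$, $|J|\le i_2-i_1+1=g+1$, and $\sum_{j\ge1}\ns{i_j}{d}=\sum_{j\in J}\ns{i_j}{d}$. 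Since the $T$-space generated by a family of $T$-spaces is precisely their sum as subspaces (the sum is already a subspace closed under substitution), the left-hand side is $\rd{\infty,I}{d}$ by definition, so $\rd{\infty,I}{d}=\sum_{j\in J}\ns{i_j}{d}$. (If $I$ happens to be finite, the same equality and bound follow directly from Lemma \ref{lemma: basic v result}.)

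To turn this into a bounded generating set, I would prove by induction on $n$ that $\ns{n}{d}$ is generated as a $T$-space by the single polynomial
$$
 P_n=\prod_{l=0}^{n-1}S^{(d)}(x_{ld+1},x_{ld+2},\ldots,x_{(l+1)d}),
$$
the $n$-fold product of copies of $S^{(d)}$ on pairwise disjoint blocks of variables. The case $n=1$ is Definition \ref{definition: nsd}. For the inductive step one uses $\ns{n+1}{d}=(\ns{n}{d}\ns{1}{d})^S=(\set P_n\endset^S\,\set S^{(d)}\endset^S)^S$ together with the elementary fact that the product of a substitution instance of $P_n$ with a substitution instance of $S^{(d)}$ is itself a substitution instance of $P_{n+1}$, obtained by substituting into the two disjoint variable blocks independently; expanding a typical element of $\set P_n\endset^S\,\set S^{(d)}\endset^S$ by bilinearity of multiplication then shows it lies in $\set P_{n+1}\endset^S$, while $P_{n+1}\in\ns{n}{d}\ns{1}{d}$ gives the reverse inclusion. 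Consequently $\rd{\infty,I}{d}=\sum_{j\in J}\set P_{i_j}\endset^S$ is generated as a $T$-space by $\set P_{i_j}\mid j\in J\endset$, a set of size $|J|\le g+1$, which is the assertion. The remark following Proposition \ref{proposition: basic sequence} moreover yields the sharper bound there.

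The genuinely load-bearing point is not a computation — all of that is already packaged in Lemma \ref{lemma: basic v result}, Proposition \ref{proposition: basic sequence} and Corollary \ref{corollary: r result} — but rather two bookkeeping checks: that the defining relations of Section \ref{section: fundamental results} hold for the sequence $\ns{n}{d}$ exactly as stated, including the small cases, and that the single-generation claim for $\ns{n}{d}$ is legitimate, i.e.\ that passing from a product of $T$-spaces to the $T$-space generated by the corresponding disjoint-variable product is reversible in both directions. Once those are settled, the bound $g+1$ is immediate and no case analysis beyond that already carried out in Proposition \ref{proposition: basic sequence} is required.
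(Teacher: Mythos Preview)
Your argument is correct and follows the paper's own proof essentially line for line: apply Corollary~\ref{corollary: r result} to put the sequence $\ns{n}{d}$ into the framework of Section~\ref{section: fundamental results}, invoke Proposition~\ref{proposition: basic sequence} to cut down to at most $g+1$ summands, and then use that each $\ns{n}{d}$ is singly generated. The only difference is that you spell out the single-generation of $\ns{n}{d}$ via the explicit product $P_n$ and an induction, whereas the paper simply asserts this fact without proof.
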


\begin{proof}
 Denote the entries of $I$ in increasing order by $i_j$, $j\ge1$.
 By Corollary \ref{corollary: r result} and Proposition \ref{proposition: basic sequence}, there exists a set $J$ of 
 positive integers with $|J|\le i_2-i_1+1$ and $\rd{\infty,I}{d}=\rd{n,I}{d}=
 \sum_{j\in J} \ns{i_j}{d}$. Since for each $i$, the $T$-space $\ns{i}{d}$ has a basis consisting of a single
 element, the result follows.
\end{proof}

\section{The $L_{n}$ sequence}
   
We shall make use of the following well known result. An element $u\in \kzerox$ is said to be
essential if $u$ is a linear combination of monomials with the property that each variable that appears
in any monomial appears in every monomial.

\begin{lemma}\label{lemma: essential}
 Let $V$ be a $T$-space and let $f\in V$. If $f=\sum f_i$ denotes the decomposition of $f$ into its
 essential components, then $f_i\in V$ for every $i$.
\end{lemma}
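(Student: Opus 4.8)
\textbf{Proof proposal for Lemma \ref{lemma: essential}.}

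The plan is to work with the natural multigrading on $\kzerox$ and exploit the fact that $T$-spaces respect the operations that preserve this grading. First I would recall that a monomial $w$ in $\kzerox$ determines, for each variable $x_i$, the number of occurrences of $x_i$ in $w$; call the resulting tuple the \emph{content} of $w$. An essential component of $f$ is, by definition, the sum of all monomials of $f$ whose content is supported on a fixed finite set $S$ of variables and involves every variable of $S$; so the decomposition $f=\sum f_i$ is really a coarsening of the multihomogeneous decomposition of $f$, and it suffices to isolate a single $f_i$ by applying $T$-space-preserving operations to $f$.

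The key step is to realize the projection onto $f_i$ by substitutions. Suppose $f_i$ is the essential component supported exactly on the variable set $\{x_{j_1},\dots,x_{j_m}\}$. I would first substitute $x_\ell\mapsto 0$ for every variable $x_\ell\notin\{x_{j_1},\dots,x_{j_m}\}$ that occurs in $f$; since $T$-spaces are closed under substitution of elements of $\kzerox$ (and $0\in\kzerox$), the resulting element lies in $V$, and it equals the sum of exactly those essential components of $f$ whose support is contained in $\{x_{j_1},\dots,x_{j_m}\}$. It remains to remove the components with support a proper subset. For this I would use the standard inclusion–exclusion trick over the field: for a variable $x_{j_r}$, the operator ``$f\mapsto f - f|_{x_{j_r}\mapsto 0}$'' kills every monomial not containing $x_{j_r}$ and fixes the rest, and again maps $V$ into $V$. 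Composing these operators over $r=1,\dots,m$ yields precisely $f_i$, still inside $V$.

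One technical point to handle carefully: the substitution $x_{j_r}\mapsto 0$ is literally an endomorphism of $\kzerox$ only in the weak sense that $0$ is an allowed value, but since $T$-spaces are by definition closed under all endomorphisms of $\kzerox$ fixing nothing in particular — i.e. under arbitrary substitutions $x_n\mapsto u_n$ with $u_n\in\kzerox$ — taking some $u_n=0$ is legitimate, and the composite of finitely many such substitutions with the identity-minus-substitution operators is a $k$-linear combination of substitution operators, under which $V$ is closed since $V$ is a $k$-subspace stable under substitutions. The only real obstacle is bookkeeping: one must check that after the first ``set the outside variables to zero'' step, the iterated operator $\prod_{r=1}^m(\mathrm{id}-\varepsilon_{j_r})$, where $\varepsilon_{j_r}$ is evaluation of $x_{j_r}$ at $0$, selects exactly the monomials containing all of $x_{j_1},\dots,x_{j_m}$ and nothing else — this is a routine finite inclusion–exclusion, but it is where the characteristic-$p$-irrelevant combinatorics actually lives. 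Once that identity is verified, $f_i\in V$ follows, and since $i$ was arbitrary the lemma is proved.
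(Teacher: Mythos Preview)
Your argument is correct and rests on the same mechanism as the paper's proof: the only substantive point in either argument is that evaluating a variable at $0$ is a legitimate substitution, so $f\mapsto f|_{x\mapsto 0}$ and hence $f\mapsto f-f|_{x\mapsto 0}$ preserve $V$. The difference is purely organizational. The paper proceeds by induction on the number of essential components: it picks a variable $x$ appearing in some but not all components, splits $f$ as $f_x+z_x$ with $f_x=f|_{x\mapsto 0}\in V$ and $z_x=f-f_x\in V$, and recurses on each piece. You instead fix a target component $f_i$, first zero out all foreign variables, and then apply the composite projector $\prod_{r}(\mathrm{id}-\varepsilon_{j_r})$ to isolate $f_i$ directly. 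Your version is more explicit about the final operator (a signed sum of $2^m$ substitutions) and makes clear that the result is characteristic-free; the paper's inductive version avoids the inclusion--exclusion bookkeeping and is a bit shorter. Neither approach gains any real leverage over the other here.
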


\begin{proof}
 We induct on the number of essential components, with obvious base case. Suppose that
 $n>1$ is an integer such that if $f\in V$ has fewer than $n$ essential components, then each belongs to $V$,
 and let $f\in V$ have $n$ essential components. Since $n>1$, there is a variable $x$ that appears in some but not
 all essential components of $f$. Let $z_x$ and $f_x$ denote the sum of the essential components of $f$ in which $x$ appears,
 respectively,  does not appear. Then evaluate at $x=0$ to obtain that $f_x=f\mkern-5mu\mid_{_{x=0}}\in V$, and thus $z_x=f-f_x\in V$
 as well. By hypothesis, each essential component of $f_x$ and of $z_x$ belongs to $V$, and thus every essential component
 of $f$ belongs to $V$, as required. 
\end{proof}

\begin{corollary}\label{corollary: sp in h1}
$\nsp{1}\subseteq \nh{1}$.
\end{corollary}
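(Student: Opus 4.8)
The plan is to realize $S^{(p)}$ as one of the essential components of a single substitution instance of $x_1^p$, and then invoke Lemma \ref{lemma: essential}.

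First I would substitute $x_1 \mapsto x_1 + x_2 + \cdots + x_p$ into the generator $x_1^p$. Since $\nh{1}$ is the $T$-space generated by $x_1^p$, it is closed under this substitution, so $(x_1 + x_2 + \cdots + x_p)^p \in \nh{1}$. Expanding this $p$-fold product in the free associative algebra $\kzerox$, in which no two variables commute, each word $x_{j_1}x_{j_2}\cdots x_{j_p}$ with $j_1,\ldots,j_p \in \{1,\ldots,p\}$ occurs exactly once, so
$$
 (x_1 + x_2 + \cdots + x_p)^p = \sum_{(j_1,\ldots,j_p)\in\{1,\ldots,p\}^p} x_{j_1}x_{j_2}\cdots x_{j_p}.
$$
Grouping the monomials on the right-hand side according to their support (the set of variables occurring in them) produces the decomposition of this element into essential components. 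The component whose monomials involve all of $x_1,\ldots,x_p$ arises exactly from those tuples $(j_1,\ldots,j_p)$ in which each of $1,\ldots,p$ appears, i.e.\ from the permutations of $(1,\ldots,p)$; hence that component is $\sum_{\sigma\in\Sigma_p} x_{\sigma(1)}x_{\sigma(2)}\cdots x_{\sigma(p)} = S^{(p)}$. By Lemma \ref{lemma: essential}, every essential component of an element of a $T$-space again lies in that $T$-space, so $S^{(p)}\in\nh{1}$.

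Finally, $\nsp{1}$ is by definition the smallest $T$-space containing $S^{(p)}$, while $\nh{1}$ is a $T$-space containing $S^{(p)}$; therefore $\nsp{1}\subseteq\nh{1}$. There is no real obstacle here: the only point needing care is the combinatorial bookkeeping that identifies the full-support essential component of the expansion with $S^{(p)}$ as given in Definition \ref{definition: nsd}. I note in passing that the characteristic is irrelevant to this argument, since the noncommutative multinomial expansion displayed above has all coefficients equal to $1$; the inclusion $\nsp{1}\subseteq\nh{1}$ thus holds over an arbitrary field.
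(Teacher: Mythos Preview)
Your proof is correct and follows exactly the paper's approach: identify $S^{(p)}$ as the full-support essential component of $(x_1+\cdots+x_p)^p\in\nh{1}$ and apply Lemma~\ref{lemma: essential}. The paper's version is terser, omitting the explicit expansion and bookkeeping you supplied, but the argument is the same.
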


\begin{proof}
 $S^{(p)}$ is one of the essential components of $(x_1+x_2+\cdots+x_p)^p$, and since $(x_1+x_2+\cdots+x_p)^p
 \in \nh{1}$, it follows from Lemma \ref{lemma: essential} that $S^{(p)}\in\nh{1}$. Thus $\nsp{1}\subseteq \nh{1}$.
\end{proof}

\begin{corollary}\label{corollary: ns in h}
 For every $m\ge1$, $\nsp{m}\subseteq \nh{m}$.
\end{corollary}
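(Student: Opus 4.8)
The plan is to prove the corollary by a short induction on $m$. The base case $m=1$ is precisely Corollary \ref{corollary: sp in h1}, which asserts $\nsp{1}\subseteq\nh{1}$; nothing more is needed there.

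For the inductive step, I would first record the one structural fact that makes the argument go through: the operation of passing to the $T$-space generated by a product is monotone in both arguments. Concretely, if $V\subseteq V'$ and $W\subseteq W'$ are subspaces of $\kzerox$, then every product $vw$ with $v\in V$ and $w\in W$ lies in the span of products from $V'$ and $W'$, and hence $(VW)^S\subseteq(V'W')^S$, since $(\,\cdot\,)^S$ is itself monotone (a larger set generates a larger $T$-space). This is immediate but worth stating explicitly.

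Granting that, suppose $m\ge1$ satisfies $\nsp{m}\subseteq\nh{m}$. Apply the monotonicity observation with $V=\nsp{m}\subseteq\nh{m}=V'$ and $W=\nsp{1}\subseteq\nh{1}=W'$, where the second containment is the base case (Corollary \ref{corollary: sp in h1}). Invoking the recursive clauses of Definitions \ref{definition: nsd} and \ref{definition: nh}, this gives
$$
 \nsp{m+1}=(\nsp{m}\nsp{1})^S\subseteq(\nh{m}\nh{1})^S=\nh{m+1},
$$
which completes the induction and hence the proof.

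I do not anticipate any genuine obstacle: the entire content is the base case, already handled via Lemma \ref{lemma: essential} in Corollary \ref{corollary: sp in h1}, together with the monotonicity of the two constructions appearing in the recursive definitions of $\nsp{m}$ and $\nh{m}$. If one preferred to avoid the explicit monotonicity remark, one could instead unwind $\nsp{m+1}$ as the $T$-space generated by all products $u_1u_2\cdots u_{m+1}$ with each $u_i\in\nsp{1}$ and observe directly that each such product lies in $\nh{m+1}$ by Corollary \ref{corollary: sp in h1}; but the inductive formulation above is cleaner.
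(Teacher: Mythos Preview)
Your argument is correct and matches the paper's approach: the paper's proof is the single sentence ``The proof is an elementary induction, with Corollary \ref{corollary: sp in h1} providing the base case,'' and your proposal simply spells out that induction explicitly via the recursive Definitions \ref{definition: nsd} and \ref{definition: nh} together with the obvious monotonicity of $(\,\cdot\,)^S$.
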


\begin{proof}
 The proof is an elementary induction, with Corollary \ref{corollary: sp in h1} providing the base case.
\end{proof}

\begin{corollary}\label{corollary: closed under commutator}
 For any $u\in \nh{1}$ and any $v\in\kzerox$, $\com u,v\in\nh{1}$.
\end{corollary}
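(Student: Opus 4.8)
The plan is to reduce the statement to $p$-th powers and then to exploit one explicitly chosen element of $\nh1$ together with two carefully paired substitutions. Since $\nh1$ is the $T$-space generated by $x_1^p$, it is the $k$-linear span of $\{\,a^p : a\in\kzerox\,\}$, and the commutator is linear in its first argument; so it suffices to prove $[a^p,v]\in\nh1$ for arbitrary $a,v\in\kzerox$.

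First I would produce the auxiliary element
$$
 D=D(x_1,x_2)=\sum_{i=0}^{p-1} x_1^{i}\,x_2\,x_1^{p-1-i},
$$
and check that $D\in\nh1$. This follows from Corollary \ref{corollary: sp in h1}: there $S^{(p)}\in\nsp1\subseteq\nh1$, and applying to $S^{(p)}$ the endomorphism of $\kzerox$ that fixes $x_1,x_2$ and sends $x_j\mapsto x_1$ for $3\le j\le p$ (under which $\nh1$ is stable) yields $S^{(p)}(x_1,x_2,x_1,\dots,x_1)\in\nh1$. Sorting the $p!$ permutations $\sigma\in\Sigma_p$ according to the slot occupied by the image of the second argument shows this element equals $(p-1)!\,D$, and $(p-1)!$ is a nonzero scalar of $k$ since $\operatorname{char}k=p$; hence $D\in\nh1$.

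Next, for $a,b\in\kzerox$ I would apply to $D$ the two substitutions $x_1\mapsto a,\ x_2\mapsto ab$ and $x_1\mapsto a,\ x_2\mapsto ba$. Both images lie in $\nh1$ because $\nh1$ is a $T$-space, and after re-indexing (put $j=i+1$ in the first sum and $j=i$ in the second) their difference telescopes:
$$
 \sum_{j=1}^{p} a^{j}\,b\,a^{p-j}-\sum_{j=0}^{p-1} a^{j}\,b\,a^{p-j}=a^p b-b a^p=[a^p,b].
$$
Thus $[a^p,b]\in\nh1$ for all $a,b\in\kzerox$, and combining this with the reduction of the first paragraph completes the proof.

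I do not anticipate a genuine obstacle: the whole content lies in spotting the right auxiliary element $D$ and the paired substitutions $x_2\mapsto ab$, $x_2\mapsto ba$, together with the trivial but essential observation that $(p-1)!\ne 0$ in characteristic $p$ — which is exactly what lets $D$ (equivalently, the ``degree-one-in-$x_2$'' part of $(x_1+x_2)^p$) be recovered from the multilinear symmetric element $S^{(p)}$ already known to belong to $\nh1$.
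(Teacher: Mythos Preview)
Your proof is correct and follows essentially the same route as the paper: both rely on the observation that $D(x_1,x_2)=\sum_{i=0}^{p-1}x_1^{i}x_2x_1^{p-1-i}=\frac{1}{(p-1)!}S^{(p)}(x_1,x_2,x_1,\ldots,x_1)\in\nh1$ via Corollary~\ref{corollary: sp in h1}. The only difference is cosmetic: the paper makes the single substitution $x_1\mapsto x$, $x_2\mapsto[x,v]$ into $D$ to obtain $[x^p,v]$ directly, whereas you use the pair $x_2\mapsto ab$, $x_2\mapsto ba$ and subtract---but this is just the linearity $D(a,ab)-D(a,ba)=D(a,[a,b])$, so the two arguments are the same up to that remark.
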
 

\begin{proof}
 It suffices to observe that 
 {\abovedisplayskip=2pt\belowdisplayskip=4pt
 $$
  \com x^p,v=\sum_{i=0}^p x^i\com x,v x^{p-i} = \frac{1}{(p-1)!}S^{(p)}(x,x,\ldots,x,\com x,v),
 $$
 }
 which belongs to $\nh{1}$ by virtue of Corollary \ref{corollary: sp in h1}.
\end{proof}

We remark again that in \cite{ShGr}, Shchigolev proves that if $k$ is infinite, then every $T$-space in $\kzerox$ is
closed under commutator in the sense of Corollary \ref{corollary: closed under commutator}. Since we have not
required that $k$ be infinite, we have provided this closure result (see also Lemma \ref{lemma: h commutator} below).
 
\begin{lemma}\label{lemma: basic h result}
 For any $m,n\ge1$, $(\nh{m}\nh{n})^S=\nh{m+n}$.
\end{lemma}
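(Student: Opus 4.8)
The cleanest route is to first extract an explicit description of $\nh{n}$ from Definition \ref{definition: nh} and then read the product formula off that description; this keeps everything at the elementary level of Corollary \ref{corollary: r result}(i) and avoids any appeal to $k$ being infinite. Specifically, I will prove by induction on $n$ that
$$
 \nh{n}=\operatorname{span}_k\{\,u_1^p u_2^p\cdots u_n^p\mid u_1,\ldots,u_n\in\kzerox\,\}\qquad(n\ge1).
$$
Granting this, the lemma is immediate: for arbitrary $m,n\ge1$, the linear span of $\{\,ab\mid a\in\nh{m},\ b\in\nh{n}\,\}$ is spanned by the products $(u_1^p\cdots u_m^p)(v_1^p\cdots v_n^p)=u_1^p\cdots u_m^p v_1^p\cdots v_n^p$, and these are precisely the monomials spanning $\nh{m+n}$, since any $w_1^p\cdots w_{m+n}^p$ factors as $(w_1^p\cdots w_m^p)(w_{m+1}^p\cdots w_{m+n}^p)$. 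Hence $\nh{m}\nh{n}$ already spans $\nh{m+n}$, and in particular $(\nh{m}\nh{n})^S=\nh{m+n}$.

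For the base case, $\nh{1}=\set x_1^p\endset^S$ is the $k$-span of the substitution instances of $x_1^p$, and every such instance has the form $\varphi(x_1)^p=u^p$ for some $u\in\kzerox$; so $\nh{1}=\operatorname{span}_k\{\,u^p\mid u\in\kzerox\,\}$. For the inductive step, assume the displayed description holds for $\nh{n}$. Then, as in the computation above, $\nh{n}\nh{1}$ is spanned by the products $(u_1^p\cdots u_n^p)(v^p)=u_1^p\cdots u_n^p v^p$. This spanning set of monomials is closed under every $k$-algebra endomorphism $\varphi$ of $\kzerox$, because $\varphi(u_1^p\cdots u_n^p v^p)=\varphi(u_1)^p\cdots\varphi(u_n)^p\varphi(v)^p$ is again of the same shape; consequently its $k$-span is already a $T$-space, so $(\nh{n}\nh{1})^S$ equals that span, which is exactly the asserted description of $\nh{n+1}$. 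This completes the induction.

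The only genuine care required is the routine bookkeeping: that the $T$-space generated by a set already closed under all endomorphisms (and under linear combination) is that set itself, and that the span of $\{\,ab\mid a\in A,\ b\in B\,\}$ is spanned by the products of any spanning sets of $A$ and $B$. No deeper input is needed. (Alternatively, one could argue directly by induction on $n$, writing $(\nh{m}\nh{n+1})^S=(\nh{m}(\nh{n}\nh{1})^S)^S$ and invoking the associativity identity $(A(BC)^S)^S=(ABC)^S=((AB)^SC)^S$ for the product-followed-by-$T$-closure operation, but verifying that identity comes down to precisely the same bookkeeping, so I prefer the explicit description.)
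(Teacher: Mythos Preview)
Your argument is correct. The explicit description $\nh{n}=\operatorname{span}_k\{\,u_1^p\cdots u_n^p\mid u_i\in\kzerox\,\}$ is valid, and once you have it the product identity is transparent; the bookkeeping you flag (that the $T$-space generated by an endomorphism-stable set equals its linear span, and that products of spanning sets span the product) is handled accurately.

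The paper's proof is the one-line alternative you mention at the end: induct on $n$, with the base case $n=1$ being exactly Definition~\ref{definition: nh}, and the inductive step resting on the associativity $(A(BC)^S)^S=((AB)^SC)^S$ of the operation $(A,B)\mapsto (AB)^S$ when $A,B,C$ are $T$-spaces. Your route trades that abstract identity for a concrete spanning set, which is a slight detour but yields the explicit description of $\nh{n}$ as a bonus; the paper's route is shorter and applies verbatim to any sequence defined recursively by $V_{n+1}=(V_nV_1)^S$ (indeed the same sentence proves Corollary~\ref{corollary: r result}(i)). As you observe yourself, the underlying verifications are of the same weight.
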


\begin{proof}
 The proof is by an elementary induction on $n$, with Definition \ref{definition: nh} providing the base case.
\end{proof}
 
\begin{lemma}\label{lemma: h base case}
For any $m\ge1$, $(\nsp{1}\nh{2m})^S\subseteq \nh{1}+\nh{m+1}$ and $(\nh{2m}\nsp{1})^S\subseteq \nh{1}+\nh{m+1}$.
\end{lemma}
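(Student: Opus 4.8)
The plan is to mirror the proof of Corollary~\ref{corollary: r result}(ii), replacing $\ns{1}{d}$ by $\nsp{1}=\ns{1}{p}$ and importing the comparison $\nsp{1}\subseteq\nh{1}$ from Corollary~\ref{corollary: sp in h1} only at the very end. The key point is that Proposition~\ref{proposition: generalized r2=r3} is a statement about $\ns{1}{d}$ for an \emph{arbitrary} positive integer $d$, so it applies in particular with $d=p$: for any $u,v\in\kzerox$,
$$(\nsp{1}uv)^S\subseteq\nsp{1}+(\nsp{1}u)^S+(\nsp{1}v)^S,$$
and similarly $(uv\,\nsp{1})^S\subseteq\nsp{1}+(u\,\nsp{1})^S+(v\,\nsp{1})^S$ from part~(ii).

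First I would use Lemma~\ref{lemma: basic h result} to write $\nh{2m}=(\nh{m}\nh{m})^S$, so that $(\nsp{1}\nh{2m})^S=(\nsp{1}\nh{m}\nh{m})^S$. Specializing the displayed inclusion to elements $u,v\in\nh{m}$ and observing that $(\nsp{1}u)^S$ and $(\nsp{1}v)^S$ are both contained in $(\nsp{1}\nh{m})^S$, one gets $(\nsp{1}uv)^S\subseteq\nsp{1}+(\nsp{1}\nh{m})^S$ for all such $u,v$. Since $\nsp{1}+(\nsp{1}\nh{m})^S$ is a $T$-space containing every generator $wuv$ (with $w\in\nsp{1}$, $u,v\in\nh{m}$) of $(\nsp{1}\nh{m}\nh{m})^S$, it follows that $(\nsp{1}\nh{m}\nh{m})^S\subseteq\nsp{1}+(\nsp{1}\nh{m})^S$. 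This is precisely the step in the proof of Corollary~\ref{corollary: r result} where the inclusion for single elements $u,v$ is promoted to an inclusion for the $T$-space product $\nh{m}\nh{m}$, and requires no new idea.

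To finish, I would apply Corollary~\ref{corollary: sp in h1} to replace $\nsp{1}$ by $\nh{1}$, and Lemma~\ref{lemma: basic h result} again to get $(\nsp{1}\nh{m})^S\subseteq(\nh{1}\nh{m})^S=\nh{m+1}$. Combining, $(\nsp{1}\nh{2m})^S\subseteq\nsp{1}+(\nsp{1}\nh{m})^S\subseteq\nh{1}+\nh{m+1}$, which is the first assertion. The second assertion, $(\nh{2m}\nsp{1})^S\subseteq\nh{1}+\nh{m+1}$, follows by the identical argument run through part~(ii) of Proposition~\ref{proposition: generalized r2=r3} (the right-handed version), so I would simply note that the proof is symmetric rather than repeat it.

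I do not expect a genuine obstacle here. The only point requiring care is the bookkeeping with the operator $(\cdot)^S$ --- in particular the reduction that lets Proposition~\ref{proposition: generalized r2=r3}, phrased for products $uv$ of single elements, be applied to the $T$-space product $\nh{m}\nh{m}$ via $\nh{2m}=(\nh{m}\nh{m})^S$ --- but this manipulation is routine and is exactly of the type already used in Section~\ref{section: fundamental results} and in the proof of Corollary~\ref{corollary: r result}. If full explicitness were desired, one could preface the argument with the elementary remark that for $T$-spaces $A,B,C$ one has $(A(BC)^S)^S=(ABC)^S$, but in view of that precedent this seems unnecessary.
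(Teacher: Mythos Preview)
Your proof is correct and follows essentially the same route as the paper: apply Proposition~\ref{proposition: generalized r2=r3}(i) with $d=p$ to elements $u,v\in\nh{m}$, then invoke $\nsp{1}\subseteq\nh{1}$ and Lemma~\ref{lemma: basic h result} to convert $\nsp{1}+(\nsp{1}\nh{m})^S$ into $\nh{1}+\nh{m+1}$, with the second assertion handled symmetrically. The paper cites Corollary~\ref{corollary: ns in h} rather than Corollary~\ref{corollary: sp in h1} for the inclusion $\nsp{1}\subseteq\nh{1}$ and is terser about the $(\cdot)^S$ bookkeeping, but the argument is the same.
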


\begin{proof}
By Proposition \ref{proposition: generalized r2=r3} (i), for any $u,v\in \nh{m}$, we have
$\nsp{1}uv\subseteq \nsp{1}+(\nsp{1}u)^S+(\nsp{1}v)^S$. By Corollary \ref{corollary: ns in h}, this gives
$\nsp{1}\nh{m}\nh{m}\subseteq \nh{1}+(\nh{1}\nh{m})^S$, and then from Lemma \ref{lemma: basic h result}, we 
obtain $\nsp{1}\nh{2m}\subseteq \nh{1}+\nh{m+1}$. The proof of the second part is similar.
\end{proof}

\begin{lemma}\label{lemma: h commutator}
 Let $m\ge1$. For every $u\in \nh{m}$ and $v\in \kzerox$, $\com u,v\in \nh{m}$.
\end{lemma}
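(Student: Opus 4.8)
The plan is to prove the statement by induction on $m$, with the base case $m=1$ being precisely Corollary~\ref{corollary: closed under commutator}.

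For the inductive step, suppose $m\ge1$ is such that $\com u,v\in\nh{m}$ for every $u\in\nh{m}$ and every $v\in\kzerox$. First I would record a concrete description of $\nh{m+1}$: since $\nh{m}$ and $\nh{1}$ are $T$-spaces, the $k$-span $W$ of the set of products $\set ab\mid a\in\nh{m},\ b\in\nh{1}\endset$ is itself a $T$-space, because a substitution endomorphism $\phi$ sends $\sum_j a_jb_j$ to $\sum_j\phi(a_j)\phi(b_j)$ with $\phi(a_j)\in\nh{m}$ and $\phi(b_j)\in\nh{1}$; hence $W=(\nh{m}\nh{1})^S=\nh{m+1}$ by Definition~\ref{definition: nh}. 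Consequently every $u\in\nh{m+1}$ can be written as $u=\sum_j a_jb_j$ with $a_j\in\nh{m}$ and $b_j\in\nh{1}$.

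Next, for $v\in\kzerox$ I would apply the Leibniz identity $\com{ab},v=a\com b,v+\com a,v\,b$ to each summand, obtaining
$$
 \com u,v=\sum_j\bigl(a_j\com{b_j},v+\com{a_j},v\,b_j\bigr).
$$
By Corollary~\ref{corollary: closed under commutator}, $\com{b_j},v\in\nh{1}$, and by the induction hypothesis $\com{a_j},v\in\nh{m}$. Therefore $a_j\com{b_j},v\in\nh{m}\nh{1}\subseteq\nh{m+1}$ and $\com{a_j},v\,b_j\in\nh{m}\nh{1}\subseteq\nh{m+1}$, so $\com u,v\in\nh{m+1}$, which completes the induction.

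The argument is routine; the only step that deserves a moment's thought is the identification of $\nh{m+1}$ with the span $W$ above, i.e. the observation that this span is already closed under substitution and so requires no further $T$-closure before the Leibniz rule is applied. One could sidestep even this by writing $\nh{m+1}=(\nh{m}\nh{1})^S$ as the span of the elements $\phi(ab)=\phi(a)\phi(b)$ with $\phi$ a substitution, $a\in\nh{m}$, $b\in\nh{1}$, and applying Leibniz to each such product. Note that no hypothesis on $p$ beyond its being prime is needed, since Corollary~\ref{corollary: closed under commutator} already holds for every prime.
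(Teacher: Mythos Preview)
Your proof is correct and follows essentially the same route as the paper's: induction on $m$ with Corollary~\ref{corollary: closed under commutator} as the base case, and the Leibniz identity $\com{ab},v=\com a,v\,b+a\com b,v$ for the inductive step. The only cosmetic difference is that the paper checks the inductive step on the single generator $x_1^p\cdots x_{m+1}^p$ of $\nh{m+1}$, whereas you handle a general element directly after identifying $\nh{m+1}$ with the $k$-span of products from $\nh{m}\nh{1}$.
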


\begin{proof}
 The proof is by induction on $m$, with Corollary \ref{corollary: closed under commutator} providing the
 base case. Suppose that $m\ge 1$ is such that the result holds. It suffices to prove that for any
 $v\in \kzerox$, $\com x_1^px_2^p\cdots x_m^px_{m+1}^p,v\in \nh{m+1}$. We have
 $$
  \com x_1^px_2^p\cdots x_m^px_{m+1}^p,v=\com x_1^px_2^p\cdots x_m^p,v x_{m+1}^p+ x_1^px_2^p\cdots x_m^p\com x_{m+1}^p,v.
 $$ 
 By hypothesis, $\com x_1^px_2^p\cdots x_m^p,v\in \nh{m}$, while $x_{m+1}^p\in \nh{1}$ and thus by 
 Corollary \ref{corollary: closed under commutator}, $\com x_{m+1}^p,v\in \nh{1}$ as well. Now by definition, 
 $\com x_1^px_2^p\cdots x_m^p,v x_{m+1}^p\in \nh{m+1}$ and 
 $x_1^px_2^p\cdots x_m^p\com x_{m+1}^p,v\in \nh{m+1}$, which completes the proof of the inductive step.
\end{proof}

\begin{lemma}\label{fundamental h result}
 Let $m\ge1$. Then $\nh{i}S^{(p)}\nh{2m-i}\subseteq \nh{1}+\nh{m+1}$ for all $i$ with $1\le i\le 2m-1$. 
\end{lemma}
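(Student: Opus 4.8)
The plan is to induct on $m$, pushing the symmetric polynomial $S^{(p)}$ toward the centre of the product. The basic mechanism is that $\nsp{1}$ is closed under commutators (the remark following Corollary \ref{corollary: mod r1}), as are $\nh{1}$ and $\nh{m}$ (Corollary \ref{corollary: closed under commutator} and Lemma \ref{lemma: h commutator}). Hence for any $a,b\in\kzerox$ one has $aS^{(p)}b = S^{(p)}ab - [S^{(p)},a]\,b$ with $[S^{(p)},a]\in\nsp{1}$, and feeding $S^{(p)}ab$ into Proposition \ref{proposition: generalized r2=r3}(i) yields
$$(\nh{i}S^{(p)}\nh{2m-i})^S\ \subseteq\ \nsp{1}+(\nsp{1}\nh{i})^S+(\nsp{1}\nh{2m-i})^S.$$

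First I would settle the balanced case $i=m$ directly from this: $(\nsp{1}\nh{m})^S\subseteq(\nh{1}\nh{m})^S=\nh{m+1}$ by Corollary \ref{corollary: sp in h1} and Lemma \ref{lemma: basic h result}, while $\nsp{1}\subseteq\nh{1}$, so $(\nh{m}S^{(p)}\nh{m})^S\subseteq\nsp{1}+\nh{m+1}\subseteq\nh{1}+\nh{m+1}$. The base case $m=1$ likewise reduces, via the displayed inclusion, to $(\nsp{1}\nh{1})^S\subseteq(\nh{1}\nh{1})^S=\nh{2}$, and Lemma \ref{lemma: h base case} supplies the configurations in which $S^{(p)}$ sits at an end.

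For $i\ne m$, say $i<m$ (the case $i>m$ being the mirror image, using the right-hand versions of Proposition \ref{proposition: generalized r2=r3} and of Lemma \ref{lemma: h base case}), I would migrate one $p$th-power factor from the larger block $\nh{2m-i}$ across $S^{(p)}$ into the smaller block. Writing $\nh{2m-i}=\nh{1}\nh{2m-i-1}$ as a span of products, using $S^{(p)}b=bS^{(p)}+[S^{(p)},b]$ with $[S^{(p)},b]\in\nsp{1}$ for $b\in\nh{1}$, and using $\nh{i}\nh{1}=\nh{i+1}$, one gets
$$\nh{i}S^{(p)}\nh{2m-i}\ \subseteq\ \nh{i+1}S^{(p)}\nh{2m-i-1}+\nh{i}\,\nsp{1}\,\nh{2m-i-1}.$$
The first summand keeps total weight $2m$ but moves $S^{(p)}$ one step toward the centre, so iterating reduces to the balanced case. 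The second summand again carries an $S^{(p)}$ (since $\nsp{1}=\set S^{(p)}\endset^S$) inside a product of total weight $2m-1$, and since $\lceil(2m-1)/2\rceil+1=m+1$ the target space $\nh{1}+\nh{m+1}$ does not enlarge when the weight drops by one — so these error terms should be absorbed by the inductive hypothesis.

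The step I expect to be the main obstacle is exactly the bookkeeping for those error terms. To make the induction close one has to strengthen the statement to cover products $\nh{j}S^{(p)}\nh{l}$ of \emph{every} total weight $j+l\le 2m$, with target $\nh{1}+\nh{\lceil(j+l)/2\rceil+1}$; the delicate point is the near-centre configurations at odd weight, where the naive estimate only yields membership in the slightly smaller-indexed $\nh{1}+\nh{\lceil(j+l)/2\rceil}$ and one must argue the relevant space really does lie in $\nh{1}+\nh{\lceil(j+l)/2\rceil+1}$. I would expect this to force running the induction jointly with the relation $\nh{2m+1}\subseteq\nh{m+1}+\nh{1}$ (the $\nh{}$-analogue of Corollary \ref{corollary: r result}(ii)), so that Lemma \ref{lemma: basic v result} can be invoked to mop up the stray low-index summands.
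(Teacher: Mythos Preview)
Your migration-and-induction scheme has a real gap that you yourself flag but do not close. When you pass from weight $j+l=2m$ to the error term at weight $2m-1$, and then migrate again, the next error has weight $2(m-1)$, so the inductive hypothesis only places it in $\nh{1}+\nh{m}$, not $\nh{1}+\nh{m+1}$. Since $\nh{m}\not\subseteq \nh{1}+\nh{m+1}$ in general, the induction does not close. Neither of your proposed repairs helps: Proposition~\ref{proposition: h result} controls $\nh{2m+1}$, not $\nh{m}$, and Lemma~\ref{lemma: basic v result} only shows that \emph{higher}-indexed spaces lie in sums of lower ones, which is the wrong direction. Running the two statements in a joint induction also risks circularity, since Proposition~\ref{proposition: h result} is proved in the paper \emph{from} this lemma.

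The paper's argument bypasses all of this with a single commutation, and it commutes the $\nh{i}$ factor rather than $S^{(p)}$. For the case $2m-i\ge m$, split $\nh{2m-i}=(\nh{m-1}\nh{m-i+1})^S$ and write, for $u\in\nh{i}$, $w\in\nh{m-1}$, $z\in\nh{m-i+1}$,
\[
 uS^{(p)}wz=\com u,{S^{(p)}w}z+S^{(p)}wuz.
\]
By Lemma~\ref{lemma: h commutator}, $\com u,{S^{(p)}w}\in\nh{i}$, so the first term lies in $\nh{i}\nh{m-i+1}\subseteq\nh{m+1}$ directly---no induction, no error propagation. The second term is in $\nsp{1}\nh{2m}$, which Lemma~\ref{lemma: h base case} places in $\nh{1}+\nh{m+1}$. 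The key is the judicious choice of the splitting point $m-1$, which makes the commutator contribution land exactly in $\nh{m+1}$ rather than in some $\nh{j}$ with $j<m+1$.
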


\begin{proof}
 Let $m\ge1$. We consider two cases: $2m-i\ge m$ and $2m-i<m$. Suppose that 
 $2m-i\ge m$, and let $u\in \nh{i}$, $w\in \nh{m-1}$ and $z\in \nh{m-i+1}$. Then $uS^{(p)}wz = (\com u,{S^{(p)}w}+S^{(p)}wu)z=
 \com u,{S^{(p)}w}z+S^{(p)}wuz$.  
  Since $u\in\nh{i}$, it follows from Lemma \ref{lemma: h commutator} that $\com u,{S^{(p)}w}\in \nh{i}$.
  But then by Lemma \ref{lemma: basic h result}, $\com u,{S^{(p)}w}z\in \nh{i+m-i+1}=\nh{m+1}$. As well, by
  Corollary \ref{corollary: sp in h1} and Lemma \ref{lemma: basic h result}, 
  $S^{(p)}wuz\in \nsp{1}\nh{m-1+i+m-i+1}=\nsp{1}\nh{2m}$, and by Lemma \ref{lemma: h base case}, $\nsp{1}\nh{2m}\subseteq
  \nh{1}+\nh{m+1}$. Thus $uS^{(p)}wz\in\nh{1}+\nh{m+1}$. This proves that $\nh{i}S^{(p)}\nh{m-1}\nh{m-i+1}\subseteq
  \nh{1}+\nh{m+1}$, and so by Lemma \ref{lemma: basic h result}, $\nh{i}S^{(p)}\nh{2m-i}=\nh{i}S^{(p)}(\nh{m-1}\nh{m-i+1})^S
  \subseteq \nh{1}+\nh{m+1}$. The argument for the case when $2m-i< m$ is similar and is therefore omitted.
\end{proof}
 
\begin{proposition}\label{proposition: h result}
 Let $p>2$. Then for every $m\ge 1$, $\nh{2m+1}\subseteq \nh{1}+\nh{m+1}$.
\end{proposition}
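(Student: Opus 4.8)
The plan is as follows. Since $\nh{1}+\nh{m+1}$ is a $T$-space and, by Definition~\ref{definition: nh} together with Lemma~\ref{lemma: basic h result}, $\nh{2m+1}=(\nh{m}\nh{1}\nh{m})^S$ is the $T$-space generated by the single monomial $x_1^px_2^p\cdots x_{2m+1}^p$, it suffices to prove that
$$
 x_1^px_2^p\cdots x_{2m+1}^p\in\nh{1}+\nh{m+1}.
$$
Write $A=x_1^p\cdots x_m^p\in\nh{m}$, $x=x_{m+1}$, and $B=x_{m+2}^p\cdots x_{2m+1}^p\in\nh{m}$, so that the element to be handled is $A\,x^p\,B$.

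Before attacking it I would first strengthen Lemma~\ref{fundamental h result}. Its proof uses $S^{(p)}$ only through the facts that $S^{(p)}\in\nsp{1}$ and that each $\nh{i}$ is closed under commutators (Lemma~\ref{lemma: h commutator}), so the same argument works verbatim with $S^{(p)}$ replaced by an arbitrary element of $\nsp{1}$; combined with the cases $i=0$ and $i=2m$, which are precisely Lemma~\ref{lemma: h base case}, this yields
$$
 \nh{i}\,\nsp{1}\,\nh{2m-i}\subseteq\nh{1}+\nh{m+1}\qquad\text{for all }0\le i\le 2m,
$$
where $\nh{0}$ is read as $k$. I also use that $\nsp{1}\subseteq\nh{1}$ gives $\nh{a}\,\nsp{1}\,\nh{b}\subseteq\nh{a+b+1}$ for all $a,b\ge 0$, and that the proof of Corollary~\ref{corollary: closed under commutator} in fact shows $\com g,v\in\nsp{1}$ whenever $g\in\nh{1}$ and $v\in\kzerox$.

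The substantive step is to extract the central power $x^p$ from $A\,x^p\,B$. The basic tool is the Wilson-type identity $S^{(p)}(x,\ldots,x,w)=-\sum_{i=0}^{p-1}x^i\,w\,x^{p-1-i}$, valid for every $w\in\kzerox$ because $(p-1)!\cong-1$, together with the vanishing $S^{(p+1)}(x,\ldots,x,w)=0$ that follows from $p!\cong0$. Applying the first identity with a suitable $w$ manufactured from $A$ (or from $B$) and then multiplying on the right by $B$ (resp.\ on the left by $A$) produces $A\,x^p\,B$ modulo a term lying in $\nsp{1}\nh{m}$ or in $\nh{m}\nsp{1}$ (both contained in $\nh{m+1}$) plus a combination of monomials in which the copies of $x$ sit deeper inside the blocks $A$ and $B$. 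One then argues by induction on $m$, with an auxiliary induction --- for fixed $m$ --- measuring how far the copies of $x$ have been driven into $A$ and $B$, each step lowering the complexity until every remaining monomial is visibly in $\nh{1}+\nh{m+1}$, either directly, or via the strengthened Lemma~\ref{fundamental h result}, or via the inductive hypothesis for a smaller value of $m$.

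The main obstacle is precisely the control of these residual monomials. If the identity for $S^{(p)}(x,\ldots,x,Ax)$ is used carelessly it simply reproduces $A\,x^p\,B$ among its summands, and the rewriting collapses to a tautology; the delicate point is to organise the manipulation so that the leftover monomials genuinely simplify and stay inside the $\nh{\cdot}$-levels to which Lemma~\ref{fundamental h result} and Lemma~\ref{lemma: h base case} apply (the level of $[x^p,x_j^p]\in\nsp{1}$ inside a product of $p$-th powers being the subtle bookkeeping). It is here that the hypothesis $p>2$ enters: re-expressing the Lie-type residual terms through elements of $\nsp{1}$ forces one to invert the factorials $k!$ with $1\le k<p$, and in particular the number $2$.
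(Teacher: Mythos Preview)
Your proposal is not a proof: it is a plan whose central difficulty you name but do not resolve. Concretely, applying $S^{(p)}(x,\dots,x,w)=-\sum_{i=0}^{p-1}x^iwx^{p-1-i}$ with $w=Ax$ and right-multiplying by $B$ gives
\[
A\,x^p\,B=-S^{(p)}(x,\dots,x,Ax)\,B-\sum_{i=1}^{p-1}x^iA\,x^{p-i}B.
\]
The first term lies in $\nsp{1}\nh{m}\subseteq\nh{m+1}$, as you say, but the residual monomials $x^iA\,x^{p-i}B$ do not lie in any $\nh{\cdot}$-level at all: the $p$ copies of $x$ have been split across $A$, not driven ``deeper inside'' it. If you try to repair this by commuting the stray $x$'s back through $A$ via $[x,x_j^p]\in\nsp{1}$, each $x^iAx^{p-i}$ returns $Ax^p$ as its leading term; summing over $i=1,\dots,p-1$ produces $(p-1)Ax^pB$ on the right, and the identity collapses to $p\cdot Ax^pB=(\text{junk})$, i.e.\ $0=0$. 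Your second tool $S^{(p+1)}(x,\dots,x,w)=0$ carries no information for the same reason (the coefficient is $p!$). So the ``auxiliary induction measuring how far the copies of $x$ have been driven'' has no well-founded complexity to decrease, and the argument as written does not terminate in $\nh{1}+\nh{m+1}$.

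The paper bypasses this entirely with a single identity you do not use: expanding $(x+y)^{2p}=\bigl((x+y)^p\bigr)^2=(x^p+y^p+u)^2$ with $u\in\nsp{1}$ and noting that $(x+y)^{2p},x^{2p},y^{2p},[y^p,x^p]\in\nh{1}$ yields
\[
2\,x^py^p\in\nh{1}+\nh{1}\nsp{1}+\nsp{1}\nh{1}.
\]
This is where $p>2$ is actually used (to divide by $2^m$). One then writes $2^m\prod_{i=1}^{2m+1}x_i^p=x_1^p\prod_{i=1}^m(2x_{2i}^px_{2i+1}^p)\in\nh{1}(\nh{1}+\nh{1}\nsp{1}+\nsp{1}\nh{1})^m$, and every term of this expansion is either $\nh{1}^{m+1}\subseteq\nh{m+1}$ or contains a factor from $\nsp{1}$ sandwiched between $\nh{\cdot}$-blocks, which is exactly what Lemma~\ref{lemma: h base case} and Lemma~\ref{fundamental h result} (in the strengthened form you correctly identified) are designed to absorb into $\nh{1}+\nh{m+1}$. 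The key step you are missing is therefore not an induction on the position of $x$, but the single algebraic move $(x+y)^{2p}$ that converts the problematic factor $x^py^p$ into something already expressed through $\nh{1}$ and $\nsp{1}$.
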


\begin{proof}
  First, consider the expansion of $(x+y)^{p}$ for any $x,y\in \kzerox$. It will be convenient to introduce the following 
  notation. Let $J_p=\set 1,2,\ldots,p\endset$. For any $J\subseteq J_p$, let $P_J=\prod_{i=1}^p z_i$,
  where for each $i$, $z_i=x$ if $i\in J$, otherwise $z_i=y$. As well, for each $i$ with $1\le i\le p-1$, we
  shall let $S^{(p)}(x,y;i)=S^{(p)}(\underbrace{x,x,\ldots,x}_{i},\underbrace{y,y,\ldots,y}_{p-i})$. Observe
  that $S^{(p)}(x,y;i)=i!(p-i)!\sum_{\substack{J\subseteq J_p\\|J|=i}} P_J$. We have
  {\abovedisplayskip=-1pt\belowdisplayskip=4pt
  $$
     (x+y)^{p}= \sum_{i=0}^p\sum_{\substack{J\subseteq J_p\\ |J|=i}}P_J
  =y^p+x^p+\sum_{i=1}^{p-1}\frac{1}{i!(p-i)!}S^{(p)}(x,y;i).
  $$
  }
  Let $u=\sum_{i=1}^{p-1}\frac{1}{i!(p-i)!}S^{(p)}(x,y;i)$, so that
  $(x+y)^p=x^p+y^p +u$, and note that $u\in \nsp{1}$. Then
  $$
   (x+y)^{2p}=y^{2p}+x^{2p}+2x^py^p+\com y^p,{x^p}+u^2+(x^p+y^p)u+u(x^p+y^p).
  $$
  Since $(x+y)^{2p}$, $x^{2p}$, $y^{2p}$, and, by Lemma \ref{lemma: h commutator}, $\com y^p,{x^p}$ all
  belong to $\nh{1}$, it follows (making use of Corollary \ref{corollary: ns in h} where necessary) that 
  $2x^py^p\in \nh{1}+\nh{1}\nsp{1}+\nsp{1}\nh{1}$.

  Consequently, for any $m\ge 1$, 
  $$
   x_1^p\prod_{i=1}^m(2x_{2i}^px_{2i+1}^p)\in
   \nh{1}(\nh{1}+\nh{1}\nsp{1}+\nsp{1}\nh{1})^m.
  $$
  By Corollary \ref{corollary: sp in h1}, Lemma \ref{lemma: basic h result},
  and Lemma \ref{fundamental h result}, $\nh{1}(\nh{1}+\nh{1}\nsp{1}+\nsp{1}\nh{1})^m
  \subseteq \nh{1}+\nh{m+1}$, and since $p>2$, it follows that $\prod_{i=1}^{2m+1} x_i^p\in \nh{1}+\nh{m+1}$.
  Thus $\nh{2m+1}\subseteq \nh{1}+\nh{m+1}$, as required.
\end{proof}

\begin{theorem}[Shchigolev's conjecture]\label{theorem: shchigolevs conjecture} 
 Let $p>2$ be a prime and $k$ a field of characteristic $p$. For any increasing sequence 
 $I=\set i_j\endset_{j\ge1}$, $\nl{\infty,I}$ is a finitely based $T$-space of $\kzerox$, with
 a $T$-space basis of size at most $i_2-i_1+1$.
\end{theorem}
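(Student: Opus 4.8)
The plan is to verify that the sequence $V_n = H_n$ satisfies the two structural hypotheses imposed on the $V_i$ in Section~\ref{section: fundamental results}, and then invoke Proposition~\ref{proposition: basic sequence} directly. Property (i), namely $(H_mH_n)^S = H_{m+n}$, is exactly Lemma~\ref{lemma: basic h result}. Property (ii), namely $H_{2m+1}\subseteq H_{m+1}+H_1$ for all $m\ge1$, is exactly Proposition~\ref{proposition: h result}, which is where the hypothesis $p>2$ is consumed. So the hypotheses of Proposition~\ref{proposition: basic sequence} hold with $V_i := H_i$.

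Next I would apply Proposition~\ref{proposition: basic sequence} to the increasing sequence $I = \{i_j\}_{j\ge1}$: it produces a set $J$ of positive integers with $|J|\le i_2-i_1+1$ such that $\sum_{j=1}^\infty V_{i_j} = \sum_{j\in J} V_{i_j}$, i.e. $\sum_{j=1}^\infty H_{i_j} = \sum_{j\in J} H_{i_j}$. Now the left-hand side $\sum_{j=1}^\infty H_{i_j}$ is precisely $L_{\infty,I}$, the $T$-space generated by $\{h_i \mid i\in I\}$ — here one should note that $H_i$ is itself a $T$-space and a sum of $T$-spaces is a $T$-space, so the finite sum $\sum_{j\in J} H_{i_j}$ is a $T$-space containing each generator $x_1^p\cdots x_{i_j}^p$ and contained in $L_{\infty,I}$, while conversely it contains all the $H_{i_j}$ and hence contains $L_{\infty,I}$. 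Thus $L_{\infty,I} = \sum_{j\in J} H_{i_j}$ is generated as a $T$-space by the finite set $\{\,x_1^p x_2^p\cdots x_{i_j}^p \mid j\in J\,\}$, which has size $|J|\le i_2-i_1+1$. That gives the claimed basis bound and establishes Shchigolev's conjecture.

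This is essentially a bookkeeping argument once Proposition~\ref{proposition: h result} is in hand, so there is no real obstacle remaining at this stage — the substantive work was done in establishing that $H_{2m+1}\subseteq H_{m+1}+H_1$, which required the careful commutator manipulations of Lemma~\ref{lemma: h commutator}, Lemma~\ref{fundamental h result}, and the $(x+y)^{2p}$ expansion (where $p>2$ is needed so that $2$ is invertible, allowing $2x^py^p$ to be replaced by $x^py^p$). The only point demanding a moment's care in writing up is the identification $\sum_{j=1}^\infty H_{i_j} = L_{\infty,I}$, i.e. that the $T$-space generated by a family of $T$-spaces equals their sum; this is immediate from the definitions but worth stating explicitly so that the passage from Proposition~\ref{proposition: basic sequence} to the theorem is airtight. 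I would therefore present the proof in three short moves: cite Lemma~\ref{lemma: basic h result} and Proposition~\ref{proposition: h result} for the two hypotheses, apply Proposition~\ref{proposition: basic sequence}, and translate the resulting finite sum into a finite $T$-space generating set of the asserted size.
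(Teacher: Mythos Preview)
Your proposal is correct and follows essentially the same approach as the paper's proof: cite Lemma~\ref{lemma: basic h result} and Proposition~\ref{proposition: h result} to check the two hypotheses of Section~\ref{section: fundamental results}, apply Proposition~\ref{proposition: basic sequence} to extract the finite index set $J$, and observe that each $H_i$ is singly generated as a $T$-space to obtain the basis bound. Your added remark justifying the identification $L_{\infty,I}=\sum_{j=1}^\infty H_{i_j}$ is a reasonable bit of extra care, but otherwise the argument is the same as the paper's.
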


\begin{proof}
 By Lemma \ref{lemma: basic h result} and Proposition \ref{proposition: h result},
 the sequence $\nh{n}$ of $T$-spaces of $\kzerox$ meets the requirements of Section \ref{section: fundamental results}.
 Thus by Proposition \ref{proposition: basic sequence}, for any increasing sequence $I=\set i_j\endset_{j\ge1}$ of positive 
 integers, there exists a set $J$ of positive integers such that $|J|\le i_2-i_1+1$
 and $\nl{\infty,I}=\sum_{j=1}^\infty \nh{i_j}=\sum_{j\in J} \nh{i_j}$.
 Since for each $i$, $\nh{i}$ has $T$-space basis $\set x_1^px_2^p\cdots x_i^p\endset$, it follows that $\nl{\infty,I}$
 has a $T$-space basis of size $|J|\le i_2-i_1+1$.
\end{proof}

Shchigolev's original result was that for the sequence $I^+$ of all positive integers, $\nl{\infty,I^+}$ is a finitely-based 
$T$-space, with a $T$-space basis of size at most $p$. It was then shown in \cite{CR}, a precursor to this work,
that $\nl{\infty,I^+}$ has in fact a $T$-space basis of size at most 2 (the bound of Theorem \ref{theorem: shchigolevs conjecture},
since $i_1=1$ and $i_2=2$).
 
It is also interesting to note that the results in this paper apply to finite sequences. Of course, if $I$ is a
finite increasing sequence of positive integers, then $\nl{\infty,I}$ has a finite $T$-space basis, but by the preceding work,
we know that it has a $T$-space basis of size at most $i_2-i_1+1$.

\end{document}